\chardef\bslash=`\\ 
\newtheorem{thm}{Theorem}[section]
\newtheorem{lem}[thm]{Lemma}
\newtheorem{conjecture}{Conjecture}
\newtheorem*{conjecture*}{Conjecture}
\theoremstyle{definition}
\newtheorem{rem}{Remark}
\newtheorem*{rem2}{Remark}
\theoremstyle{remark}
\newcommand{\conref}[1]{Conjecture~\ref{#1}}
\newcommand{\secref}[1]{Section~\ref{#1}}
\newcommand{\lemref}[1]{Lemma~\ref{#1}}
\newcommand{\figref}[1]{Figure~\ref{#1}}
\newcommand{\E}{\mathrm{E}}
\newcommand{\Var}{\mathrm{Var}}
\definecolor{gray}{rgb}{0.5,0.5,0.5}
\newcommand{\obsgray}[1]{\textcolor{gray}{#1}}
\newcommand{\divides}{\mid}
\newcommand{\e}{\mathrm{e}}
\newcommand{\B}{\mathcal{B}}
\DeclareMathOperator{\li}{li}
\def\imod#1{\allowbreak\mkern10mu({\operator@font mod}\,\,#1)}
\newcommand{\eval}[2][\right]{\relax
  \ifx#1\right\relax \left.\fi#2#1\rvert}
\title[Primes in the intervals between primes squared]{Primes in the intervals between primes squared}
\author{Kolbj{\o}rn Tunstr{\o}m}
\email{kolbjorn@chalmers.se}
\address{Complex Systems Group, Department of Energy and Environment, Chalmers University of Technology, 41296 Gothenburg, Sweden}
\begin{document}

\begin{abstract} 
The set of short intervals between consecutive primes squared has the pleasant---but seemingly unexploited---property that each interval $s_k:=\{p_k^2, \dots,p_{k+1}^2-1\}$ is fully sieved by the $k$ first primes. Here we take advantage of this essential characteristic and present evidence for the conjecture that $\pi_k \sim |s_k|/ \log p_{k+1}^2$, where $\pi_k$ is the number of primes in $s_k$; or even stricter, that $y=x^{1/2}$ is both necessary and sufficient for the prime number theorem to be valid in intervals of length $y$. In addition, we propose and substantiate that the prime counting function $\pi(x)$ is best understood as a sum of correlated random variables $\pi_k$. Under this assumption, we derive the theoretical variance of $\pi(p_{k+1}^2)=\sum_{j=1}^k  \pi_j$, from which we are led to conjecture that $|\pi({x})-\li(x)| =O(\sqrt{\li(x)})$. Emerging from our investigations is the view that the intervals between consecutive primes squared hold the key to a furthered understanding of the distribution of primes; as evidenced, this perspective also builds strong support in favour of the Riemann hypothesis.
\end{abstract}

 \maketitle



\section{Introduction}
\label{sec:intro}
An important theme in analytic number theory is the distribution of primes in short intervals. Notably, it is still an open question for what exact interval lengths the prime number theorem is valid or breaks down, as it eventually does for short enough intervals. The problem, formally stated, is to identify for which functions $y=y(x)$, as $x\rightarrow \infty$, 
\begin{align}
	\pi(x + y)-\pi(x) \sim \frac{y}{\log x}.
\label{eq:shortPNT}
\end{align}

Currently, the gulf between the known upper and lower bounds of $y$ is huge. The best unconditional estimate of the upper bound, proved by Heath-Brown \citep{Heath-Brown:1988}, is $y=x^{7/12+\epsilon(x)}$, where $\epsilon(x)\rightarrow 0$ as $x\rightarrow \infty$, an estimate that is lowered to $y=x^{1/2+\epsilon}$ assuming the Riemann hypothesis (RH). In contrast, Selberg, also assuming RH, proved that as long as $y/ (\log x)^2 \rightarrow \infty$ when $x\rightarrow \infty$, then \eqref{eq:shortPNT} holds for \textit{almost all} $x$, understood in the sense that the set of $x\in[0,X]$ for which \eqref{eq:shortPNT} is not true is $o(X)$ as $X\rightarrow \infty$. Unconditionally, this almost-all result has been proven to hold with $y=x^{1/6-\epsilon}$ \citep{Zaccagnini:1998ub}. It was long thought that Selberg's result  would be true even in the case of all $x$ \citep{Granville:1995:2}, but Maier \citep{Maier:1985ww} made it clear that there are indeed infinitely many exceptions to Selberg's result, for any function of the form $y=(\log x)^\lambda$, with $\lambda>1$. It is not known whether there exist exceptions that are larger than $(\log x)^\lambda$, but it has been conjectured that $y> x^\epsilon$ will suffice for \eqref{eq:shortPNT} to hold for all $x$, see e.g. \citep{Granville:1995:2} or \citep{Soundararajan:2007vl}.

In this paper we draw attention to the short intervals between consecutive primes squared, defined by $s_k:=\{p_{k}^2, \dots p_{k+1}^2-1\}$ for $k\geq 1$. These intervals naturally occur in the context of the sieve of Eratosthenes, and in particular, each $s_k$ has the specific quality of being fully sieved by the $k$ first primes; any element in $s_k$ is either divisible by some $p \in \mathcal{P}_k:=\{p_1, \dots,p_{k}\}$ or else is a prime $p \notin \mathcal P_k$. In addition, the exact distribution of primes in $s_k$ is in its entirety build up of the periodic sequences 
\begin{align*}
\rho_{k}(n):=\begin{cases}
    p_k & \text{if $p_k \divides n$,}\\
    1 & \text{otherwise},
  \end{cases}
\end{align*}
which we visualise for the specific example of $s_3$ by the following table:\\
{\scriptsize
$$
\begin{tabular}{c|ccccccccccccccc|}
\obsgray{$n$} & 25 & 26 & 27 & 28 & \bf{29} & 30 & \bf{31} & 32 & 33 & 34 & 35 & 36 & \bf{37} & $\cdots$ & 48\\
\hline  \\ [-3mm]
\obsgray{$\rho_1(n)$}  & 1 & $p_1$ & 1 & $p_1$ & 1 & $p_1$ & 1 & $p_1$ & 1 & $p_1$ & 1 & $p_1$ & 1 &$\cdots$ & $p_1$\\
\obsgray{$\rho_2(n)$}  &1 & 1 & $p_2$ & 1 & 1 & $p_2$ & 1 & 1 & $p_2$ & 1 & 1 & $p_2$ & 1 &$\cdots$ & $p_2$\\
\obsgray{$\rho_3(n)$}  &$p_3$ & 1 & 1 & 1 & 1 & $p_3$ & 1 & 1 & 1 & 1 & $p_3$ & 1 & 1 &$\cdots$ & 1
\end{tabular}
$$}\\
Together with the fact that these intervals make up a complete subdivision of the natural numbers, these insights allow us to argue for the range in which the prime number theorem is valid, as well as providing a rudimentary explanation of the behaviour of the prime counting function $\pi(x)$. Specifically, we present a heuristic as to why the following conjecture should hold:
\begin{conjecture*}
The choice of $y= x^{1/2}$ is necessary and sufficient for 
\begin{align*}
	\pi(x + y)-\pi(x) \sim \frac{y}{\log (x+y)}
\end{align*}
to hold for all $x$ as $x\rightarrow \infty$.
\end{conjecture*}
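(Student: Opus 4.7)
The plan is to leverage the paper's key structural insight that each interval $s_k=\{p_k^2,\ldots,p_{k+1}^2-1\}$ is fully sieved by $\mathcal{P}_k=\{p_1,\ldots,p_k\}$, together with the prior conjecture $\pi_k\sim|s_k|/\log p_{k+1}^2$. Sufficiency and necessity of the threshold $y=x^{1/2}$ will be addressed separately.

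For sufficiency, I would first observe that $|s_k|=p_{k+1}^2-p_k^2=(p_{k+1}-p_k)(p_{k+1}+p_k)$ is of order $\sqrt{x}$ times the local prime gap, hence typically $|s_k|\gg\sqrt{x}$. Given $x$ with $p_k^2\le x<p_{k+1}^2$, the interval $[x,x+\sqrt{x}]$ lies in $s_k\cup s_{k+1}$ and crosses at most one boundary. Assuming prime density is sufficiently uniform inside $s_k$ at scales $\ge\sqrt{x}$---which I would justify via the complete-sieve structure combined with residue-class uniformity modulo the primorial $\prod_{p\le p_k}p$ and the fact that any sub-interval of length $\sqrt{x}$ still contains many copies of each small residue class---the local density $\pi_k/|s_k|\sim 1/\log x$ propagates to every such sub-interval, yielding $\pi(x+y)-\pi(x)\sim y/\log(x+y)$.

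For necessity, I would argue that any $y=o(\sqrt{x})$ must fail. The cleanest route invokes the variance-type conjecture $|\pi(x)-\li(x)|=O(\sqrt{\li(x)})$ stated in the abstract: this implies prime-count fluctuations of order $\sqrt{x}/\log x$ on scales comparable to $\sqrt{x}$. When $y=o(\sqrt{x})$, the main term $y/\log x$ is itself $o(\sqrt{x}/\log x)$, so such fluctuations can dominate the expected count for infinitely many $x$, defeating the asymptotic. This is consistent with Maier's construction supplying oscillations at scale $(\log x)^\lambda$; extending a Maier-style Erd\H{o}s--Rankin construction to cover the full regime $(\log x)^\lambda\le y\le x^{1/2-\epsilon}$ would complete the argument.

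The main obstacle is the necessity direction: Maier's result is tight only at scale $(\log x)^\lambda$, and closing the gap up to $x^{1/2-\epsilon}$ demands constructing---or probabilistically sampling---sub-intervals of $s_k$ with anomalously many or few unsieved integers. A probabilistic treatment of the unsieved integers in $s_k$ as correlated random variables, in the spirit proposed in the abstract, should recover the right order of fluctuations; turning this into a rigorous exceptional-set construction, and reconciling it with the deterministic sieve structure of $s_k$, is the key technical hurdle and the reason the statement is offered as a conjecture rather than a theorem.
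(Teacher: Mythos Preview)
Your heuristic differs substantially from the paper's, especially on the necessity side, and your necessity argument contains a logical gap.

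For necessity the paper does \emph{not} invoke the variance conjecture. It argues directly from the periodic-sequence picture: the prime pattern in $s_k$ is built from the sequences $\rho_1,\ldots,\rho_k$ with periods $p_1,\ldots,p_k$, so an interval $[x,x+y]$ with $x\in s_k$ can only sample that pattern faithfully if $y\ge p_k\approx\sqrt{x}$. If $y_\epsilon=o(\sqrt{x})$, Bertrand's postulate gives $\pi(\sqrt{x})-\pi(y_\epsilon)\to\infty$, so $[x,x+y_\epsilon]$ eventually fails to span infinitely many of the relevant periods and can only sample the coarser distribution of coprimes to some $p_m\#$ with $m\ll k$; the paper then asserts this forces repeated excursions from the predicted density (and states a Maier-type Conjecture~3 for all $y_\epsilon=o(\sqrt{x})$). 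Your route via $|\pi(x)-\li(x)|=O(\sqrt{\li(x)})$ is problematic: that bound is an \emph{upper} bound on global fluctuations, not a lower bound, and it concerns the cumulative count rather than local counts on scale $y$. It does not by itself produce intervals where the short-interval asymptotic fails; you would need an oscillation result or an explicit construction, which is precisely what you flag as the hurdle.

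For sufficiency the two sketches are closer in spirit but differently organised. You argue uniformity of the sieved set inside $s_k$ at scale $\sqrt{x}$ and propagate the density $\pi_k/|s_k|$. The paper instead frames $y=\sqrt{x}$ as the minimal length spanning the largest period $p_k$, and then separately argues that any $y_\delta$ growing \emph{faster} than $\sqrt{x}$ eventually covers arbitrarily many consecutive $s_j$, so using the single density $1/\log(x+y_\delta)$ mixes genuinely different local densities and degrades the estimate (the analogy drawn is $x/\log x$ versus $\li(x)$). The paper is explicit that the actual asymptotic value in the conjecture is taken as an assumption of the heuristic, not derived; your uniformity claim inside $s_k$ is likewise an assumption rather than something the sieve structure alone delivers.
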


%
%
%
Furthermore, we present evidence that the set of $\pi_k$s satisfies a conjecture of Montgomery and Soundararajan \citep{Montgomery:2004de}, that the number of primes in $s_k$, denoted $\pi_k$, is expected to follow a normal distribution with mean 
\begin{align*}
	\mu_k = \frac{|s_k|}{\log p_{k+1}^2}
\end{align*}
and standard deviation
\begin{align*}
	\sigma_k = 
		\frac{\sqrt{|s_k| \log (p_{k+1}^2/|s_k|)}}{\log p_{k+1}^2}.
\end{align*}
Because of the periodic structure of the sequences $\rho_{k}(n)$, the number of primes in two intervals $s_i$ and $s_j$ are not independent, but correlated. Following from this, we demonstrate that: 
\begin{rem2}
The prime counting function $\pi(x)$ behaves ---and should be understood---as a sum of \emph{correlated} random variables $\pi_k$, each normally distributed with mean $\mu_k$ and standard deviation $\sigma_k$.
\end{rem2}

Building on this insight, we formulate a random model of the primes, where the random variables are the number of primes in each interval $s_k$, and where these variables derive from drawing random translations of the sequences $\rho_{k}(n)$. This construction naturally preserves the observed correlations between $\pi_k$s. Moreover, assuming this model, we calculate the theoretical variance of $\pi(p_{k+1}^2)=\sum_{j=1}^k \pi_j$, which in turn suggests the conjecture
\begin{conjecture*}
The error term in the prime number theorem satisfies
\begin{align*}
	|\pi(x) - \li(x)| = O\left(\sqrt{\li(x)}\right).
\end{align*}
\end{conjecture*}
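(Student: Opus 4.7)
The plan is to derive the conjectured error bound directly from the random model introduced earlier, in which $\pi(p_{k+1}^2)=\sum_{j=1}^{k}\pi_j$ is expressed as a sum of correlated, asymptotically Gaussian variables $\pi_j$ with means $\mu_j$ and standard deviations $\sigma_j$. Since the bound asserts $|\pi(x)-\li(x)|=O(\sqrt{\li(x)})$, the natural strategy is to show that the standard deviation of the sum is of order $\sqrt{\li(x)}$ on the discrete scale $x=p_{k+1}^2$, translate this into a typical-size bound via the Gaussian approximation, and finally interpolate between consecutive prime squares.

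First I would expand
$$\Var\Bigl(\sum_{j=1}^{k}\pi_j\Bigr)\;=\;\sum_{j=1}^{k}\sigma_j^2\;+\;2\sum_{1\leq i<j\leq k}\Cov(\pi_i,\pi_j).$$
For the diagonal piece, inserting $\sigma_j^2=|s_j|\log(p_{j+1}^2/|s_j|)/(\log p_{j+1}^2)^2$ and using the prime number theorem estimates $|s_j|\sim 2p_j\log p_j$ and $\log p_{j+1}^2\sim 2\log p_j$, partial summation converts $\sum_j\sigma_j^2$ into a Riemann sum for $\int^{x}dt/(2\log t)$, which is of size $\li(x)$. This matches the intuition that each interval contributes a variance comparable to its mean prime count.

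Next I would estimate the covariance sum. Since the random translations applied to distinct sequences $\rho_m$ are independent across $m$, only the primes $\leq p_i$ that sieve both $s_i$ and $s_j$ contribute to $\Cov(\pi_i,\pi_j)$ for $i<j$. A second-moment computation exploiting the periodicity of the $\rho_m$ should deliver a bound of the form $\Cov(\pi_i,\pi_j)\ll\sigma_i\sigma_j\cdot r(i,j)$, where $r(i,j)$ is a decay factor measuring the disparity between the two sieve ranges. Summing, the off-diagonal contribution should at worst match the diagonal, giving $\Var(\pi(p_{k+1}^2))=O(\li(p_{k+1}^2))$. The Gaussianity built into the model then yields $|\pi(x)-\li(x)|=O(\sqrt{\li(x)})$ at each $x=p_{k+1}^2$, and since $\pi$ changes by at most $\pi_k=o(\sqrt{\li(x)})$ between consecutive squares, the bound extends to all $x$.

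The main obstacle will be controlling the covariances. Even within the random model, keeping the off-diagonal sum below $O(\li(x))$ requires precise cancellation between positive and negative contributions arising from the interlocking periodic structure of the sequences $\rho_m$, and extracting the correct decay factor $r(i,j)$ is where the heuristic is most delicate. A more serious conceptual obstacle is that a variance bound---or equivalently an \emph{almost all $x$} statement inside the random model---does not by itself force the pointwise bound to hold unconditionally: promoting it to a theorem about the actual primes would subsume the Riemann hypothesis. Within the present framework, the final statement should therefore be understood as a prediction of the random model rather than a proved theorem.
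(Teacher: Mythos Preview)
Your heuristic follows the same overall architecture as the paper---expand the variance of $\sum_j\pi_j$ into diagonal plus off-diagonal pieces, then invoke Gaussianity---but diverges at the crucial step. The paper does not attempt to bound the covariance terms in absolute value via a decay factor $r(i,j)$ as you do; instead, its central structural claim (argued in Sections~\ref{sec:variance}--\ref{sec:historicalnote} via the explicit Hausman--Shapiro covariance formula and its extension to two intervals) is that the off-diagonal sum is dominated by \emph{negative} terms, so that the variance of the correlated model is majorised by that of the uncorrelated model, which in turn is bounded by $2\e^{-\gamma}\li(x)$. This buys a clean one-sided inequality rather than the delicate cancellation you anticipate needing. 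For the diagonal sum, the paper avoids your pointwise claim $|s_j|\sim 2p_j\log p_j$---which is false, since $|s_j|=2p_{j+1}g_j-g_j^2$ fluctuates with the gap $g_j$---and instead invokes Cram\'er's conjecture to obtain $\log(p_{j+1}^2/l_j)\sim\tfrac12\log p_{j+1}^2$ uniformly in $j$, from which $\sum_j\sigma_j^2\sim\tfrac12\li(x)$ follows directly.

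There is also a genuine gap in your interpolation step. You assert that $\pi_k=o(\sqrt{\li(x)})$ between consecutive prime squares, but this is false even under Cram\'er's conjecture: $\pi_k\sim l_k/\log p_{k+1}^2$ can be as large as $O(p_k\log p_k)$ when $g_k\asymp(\log p_k)^2$, whereas $\sqrt{\li(p_{k+1}^2)}\sim p_k/\sqrt{2\log p_k}$, so the ratio grows like $(\log p_k)^{3/2}$. Even for the \emph{average} gap $g_k\sim\log p_k$ one gets $\pi_k\sim p_k$, which already exceeds $\sqrt{\li(x)}$ by a factor $\sqrt{\log p_k}$. The paper simply does not address interpolation between consecutive prime squares and passes from the discrete statement to the continuous one as a restatement; you should do likewise rather than commit to an estimate that fails. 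Your closing caveat---that the result is a model prediction rather than a theorem---is exactly right and matches the paper's stance.
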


In sum, it emerges that the apparent random behaviour of the primes and the prime counting function $\pi(x)$ can be fundamentally understood in terms of the intervals $s_k$ and the underlying periodic sequences $\rho_k(n)$. Through Koch's equivalence criterium \citep{vonKoch:1901ui}, our results also strongly support the correctness of the Riemann hypothesis, and as such might pave the way towards a complete technical proof. 

%
%

\section{Notation and definitions}
\label{sec:notation}
We write the set consisting of the $k$ first primes as $\mathcal P_k:=\{p_1, \dots, p_k\}$, the intervals between consecutive primes squared as $s_k:=\{p_k^2, \dots, p_{k+1}^2-1\}$, and the length of $s_k$ as $l_k := |s_k| = p_{k+1}^2 - p_{k}^2$, where $k\geq 1$. As usual, the number of primes less than or equal to $x$ is given by $\pi(x)$, but in addition, since the number of primes in $s_k$ will appear frequently, we establish the shorthand notation
\begin{align*}
	\pi_k := \pi(p_{k+1}^2) - \pi(p_{k}^2).
\end{align*}
For the same reason, in the case when we apply the logarithmic integral, 
\begin{align*}
	\li(x):= \int_2^x \frac{dt}{\log t},
\end{align*}
to the interval $s_k$, we write
\begin{align*}
	\li_k:= \li(p_{k+1}^2) - \li(p_{k}^2).
\end{align*}

Moreover, we need notation for the expected number of primes in $s_k$---understood as the expected number of coprimes to $p_k\#$ in a random interval of length $l_k$ (where $p_k\#:=\prod_{p\in \mathcal P_k}p$ is the primorial of $p_k$). The probability of a random integer being coprime to $p_k\#$ is given by the Euler product $\prod_{p \in \mathcal P_k} \left(1- \frac{1}{p}\right)$, and multiplying this by $l_k$ produces the expected number of primes in $s_k$, which we denote 
\begin{align}
	\tilde \pi_k := l_k \cdot \prod_{p \in \mathcal P_k} \left(1- \frac{1}{p}\right).
\label{eq:tildepi_k2}
\end{align}

We now use this definition to construct a probabilistic prime counting function $\tilde \pi(x)$---the expected number of primes less than or equal to $x$. By assuming $k$ to be the integer such that $p_k^2 \leq x < p_{k+1}^2$, we define $\tilde \pi(x)$ simply as the sum taken over the individual estimates $\tilde \pi_j$, $1\leq j \leq k$, namely
\begin{align}
	\tilde \pi(x) :=  \sum_{j=1}^{k-1} \tilde \pi_j
			+ \frac{x-p_k^2}{l_k} \, \tilde \pi_k. 
\label{eq:tildepi_x}
\end{align}
The last term on the right side adjusts for the fact that $x$ in general reaches only partially into the last interval $s_k$.

Finally, we emphasise the fact that the distribution of primes within any interval $s_k$ can be viewed as a construction of $k$ periodic sequences.
To make this structure apparent,
we first define an arithmetic function that picks out the integers $n$ coprime to a given prime $p_k$,
\begin{align*}
\rho_{k}(n):=\begin{cases}
    p_k & \text{if $p_k \divides n$,}\\
    1 & \text{otherwise}.
  \end{cases}
\end{align*}
Then we apply this definition to construct a second arithmetic function that locates all $n$ coprime to $p_k\#$,
\begin{align*}
	  R_{k}(n) :=   \prod_{1\leq i \leq k} \rho_{i}(n).
\end{align*}
By these definitions $R_{k}(n)=1$ whenever $(n,p_k\#)=1$, and both $\rho_{k}(n)$ and $R_{k}(n)$ are periodic, satisfying for  any integer $m$ the equalities
\begin{align*}
	\rho_{k}(n + m p_k) = \rho_{k}(n) \quad \textrm{and} \quad 	R_k(n+ m p_k \#) = R_k(n).
\end{align*}

Since $s_k$ is sieved completely by the $k$ first primes, the primes within $s_k$ align with the 1s in $R_k$, and therefore
the number of primes in $s_k$ equals the number of 1s in $R_k$ across the interval. We visualise this for the specific example of $s_3$ by the following table:\\
{\scriptsize
$$
\begin{tabular}{c|ccccccccccccccc|}
\obsgray{$n$} & 25 & 26 & 27 & 28 & \bf{29} & 30 & \bf{31} & 32 & 33 & 34 & 35 & 36 & \bf{37} & $\cdots$ & 48\\
\hline  \\ [-3mm]
\obsgray{$\rho_1(n)$}  & 1 & $p_1$ & 1 & $p_1$ & 1 & $p_1$ & 1 & $p_1$ & 1 & $p_1$ & 1 & $p_1$ & 1 &$\cdots$ & $p_1$\\
\obsgray{$\rho_2(n)$}  &1 & 1 & $p_2$ & 1 & 1 & $p_2$ & 1 & 1 & $p_2$ & 1 & 1 & $p_2$ & 1 &$\cdots$ & $p_2$\\
\obsgray{$\rho_3(n)$}  &$p_3$ & 1 & 1 & 1 & 1 & $p_3$ & 1 & 1 & 1 & 1 & $p_3$ & 1 & 1 &$\cdots$ & 1\\
\hline  \\ [-3mm]
\obsgray{$R_3(n)$}  & $p_3$ & $p_1$ & $p_2$ & $p_1$ & \bf{1} & $p_1 p_2 p_3$ & \bf{1} & $p_1$ & $p_2$ & $p_1$ & $p_3$ & $p_1p_2$ & \bf{1} &$\cdots$ & $p_1p_2$ 
\end{tabular}
$$}\\
But in general, if we pick a random interval $\tilde s_k$ of length $l_k$, the 1s in $R_k$ across $\tilde s_k$ are not necessarily primes; all we know is that they are coprime to $p_k\#$. And because $R_k$ is periodic with period $p_k\#$, the number of coprimes to $p_k\#$ in $\tilde s_k$ can take any out of $p_k\#$ (non-unique) values; the probabilistic counting function $\tilde \pi_k$ is the mean value of these.

The last statement can be made explicit by applying sieve notation. Consider therefore an arbitrary interval $A$ and denote the number of coprimes to $p_k\#$ in $A$ by
\begin{align*}
	S(A,p_k\#) := |\{n: n\in A, R_k(n)=1 \}|.
\end{align*}
It follows from the periodicity of $R_k$ that $S(A,p_k\#)$ is periodic as well; if $A^j$ denotes $A$ left-shifted $j$ times, the equality
\begin{align*}
	S(A^{m\cdot  p_k\#},p_k\#) = S(A,p_k\#)
\end{align*}
holds for any integer m. Using this notation, it is obvious that we can restate the probabilistic prime counting function $\tilde \pi_k$ 
as the mean value taken over the sample space 
\begin{align*}
	\Omega_k := \{S(s_k^j,p_k\#)\}_{j=0}^{p_k\#-1}.
\end{align*}
In other words, 
\begin{align*}
	\tilde \pi_k = \frac{1}{p_k\#} \sum_{j=0}^{p_k\#-1}  S(s_k^j,p_k\#).
\end{align*}

The actual number of primes in $s_k$ coincides with the specific element in $\Omega_k$ corresponding to $j=0$,  
\begin{align*}
	\pi_k = S(s_k^0,p_k\#)= S(s_k,p_k\#).
\end{align*}
The prime counting function $\pi_k$ is therefore just one instance out of $p_k\#$ elements in the sample space underlying $\tilde \pi_k$. The crucial point, nonetheless, is that all elements in $\Omega_k$, $\pi_k$ inclusive, stem from the same underlying structure of $k$ periodic sequences.

\section{Approximate expressions for $\tilde \pi_k$ and $\tilde \pi(x)$}
\label{sec:probPNT}
From \eqref{eq:tildepi_k2}, we know that the expected number of primes in the interval $s_k$ can be expressed exactly in terms of  
\begin{align*}
	\tilde \pi_k = l_k \cdot \prod_{p \in \mathcal P_k} \left(1- \frac{1}{p}\right).
\end{align*}
Applying Merten's product theorem, we attain the following approximation:  
\begin{lem}
\label{lem:MPNT1} 
\begin{align}
	\tilde \pi_k \sim  2 \e^{-\gamma} \frac{l_k}{\log p_{k+1}^2}.
\label{eq:tildepi_k_mertens}
\end{align}
\end{lem}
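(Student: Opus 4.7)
The plan is to apply Mertens' third theorem directly to the finite product in the exact formula for $\tilde\pi_k$, and then reconcile the resulting $\log p_k$ denominator with the $\log p_{k+1}^2 = 2\log p_{k+1}$ denominator appearing in the claim. Starting from
\begin{align*}
    \tilde\pi_k = l_k \cdot \prod_{p \in \mathcal{P}_k}\left(1-\frac{1}{p}\right) = l_k \cdot \prod_{p \leq p_k}\left(1-\frac{1}{p}\right),
\end{align*}
I would invoke Mertens' theorem, which states that $\prod_{p\leq x}(1-1/p) \sim e^{-\gamma}/\log x$ as $x\to\infty$, evaluated at $x = p_k$. This immediately yields
\begin{align*}
    \tilde\pi_k \sim \frac{e^{-\gamma}\, l_k}{\log p_k}.
\end{align*}

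The remaining step is to replace $\log p_k$ by $2\log p_{k+1} = \log p_{k+1}^2$ at the cost only of the factor of $2$ appearing in the target expression. For this, it suffices to show $\log p_k \sim \log p_{k+1}$ as $k\to\infty$. Writing $\log p_{k+1} = \log p_k + \log(p_{k+1}/p_k)$, one notes that $p_{k+1}/p_k \to 1$ (a consequence of the prime number theorem, or already of Bertrand's postulate combined with the fact that $p_k\to\infty$), so $\log(p_{k+1}/p_k) \to 0$ while $\log p_k \to \infty$. Hence $\log p_{k+1}/\log p_k \to 1$, which gives
\begin{align*}
    \frac{e^{-\gamma}\, l_k}{\log p_k} \sim \frac{e^{-\gamma}\, l_k}{\log p_{k+1}} = \frac{2\, e^{-\gamma}\, l_k}{\log p_{k+1}^2},
\end{align*}
establishing the claim.

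There is no real obstacle here: the proof is essentially a direct substitution of Mertens' asymptotic into the exact product, together with the trivial observation that consecutive primes have asymptotically equal logarithms. The only small subtlety worth making explicit is that multiplying $l_k$ (which is not assumed to be small or controlled in any particular way relative to $p_k$) into an asymptotic identity for the product is harmless, because the $\sim$ relation for the product factor involves a multiplicative error $1+o(1)$ that remains $1+o(1)$ after being multiplied by the deterministic quantity $l_k$. Thus the whole argument fits in a few lines, and the statement of the lemma is really a cosmetic rewriting of Mertens' theorem in a form tailored to the intervals $s_k$.
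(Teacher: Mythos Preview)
Your proof is correct and follows essentially the same route as the paper: apply Mertens' third theorem to the product over $p\le p_k$ and then adjust the logarithm to $\log p_{k+1}^2$. The paper phrases this as applying Mertens at $\sqrt{x}$ for $x\in s_k$ (with an explicit error term) and then specialising to $x=p_{k+1}^2$, whereas you apply it directly at $p_k$ and invoke $\log p_k\sim\log p_{k+1}$; the two presentations are equivalent.
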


\begin{proof} Assume that $x$ is a real number within the interval $s_k$, so that $p_k^2 \leq x < p_{k+1}^2$. Then we can state Merten's product theorem 
\citep{Mertens:1874tx}
as
\begin{align}
	\prod_{p \in \mathcal  P_k} (1-\frac{1}{p}) = \e^{-\gamma+\delta}\frac{1}{ \log \sqrt{x}} = 2 \e^{-\gamma+\delta}\frac{1}{ \log x}, 
\label{eq:mertens}	
\end{align}
where $\gamma$ is the Euler--Mascheroni constant and $\delta$ is a measure of the uncertainty of the approximation, satisfying
\begin{align}
	|\delta | < \frac{4}{\log (\sqrt{x}+1)} + \frac{2}{\sqrt{x} \log \sqrt{x}} + \frac{1}{2 \sqrt{x}}.  
\label{eq:mertenerror}
\end{align}
By combining \eqref{eq:tildepi_k2} and \eqref{eq:mertens}, it follows immediately that 
\begin{align*}
	\tilde \pi_k =  2 \textrm{e}^{-\gamma+\delta} \frac{l_k}{\log x}.
\end{align*}
Additionally, \eqref{eq:mertenerror} implies that in order to minimize the error we should choose $x$ as large as possible---that is, $x=p_{k+1}^2-\epsilon$, where $\epsilon>0$ is infinitesimal---by which we obtain \eqref{eq:tildepi_k_mertens}.
\end{proof}
Note that by the choice of $x=p_{k+1}^2$ in the proof above, the estimate of $\tilde \pi_k$ reflects the characteristic property of the sieving process; sieving by the $k$ first primes removes all composites less than $p_{k+1}^2$. 

Similarly, we obtain an approximate expression for $\tilde \pi(x)$:  
\begin{lem}
\label{lem:MPNT3}
\begin{align*}
	\tilde \pi(x) \sim  2 \e^{-\gamma} \li (x).
\end{align*}
\end{lem}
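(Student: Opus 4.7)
The plan is to recognise $\tilde\pi(x)$ as an integral of a step function and then pass from Mertens' theorem directly to the asymptotic for $\li(x)$. The pivotal observation is that the sieving product $F(t):=\prod_{p\le \sqrt t}(1-1/p)$ is piecewise constant, jumping only at the breakpoints $p_k^2$, with value $\tilde\pi_j/l_j$ on the interval $s_j$ (because $\{p:p\le\sqrt t\}=\mathcal P_j$ for $t\in s_j$).

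First I would establish the exact integral identity
\[
\tilde\pi(x) \;=\; \int_{4}^{x} F(t)\, dt.
\]
This follows directly from the definition \eqref{eq:tildepi_x}: the partial sum $\sum_{j=1}^{k-1}\tilde\pi_j$ equals $\sum_{j=1}^{k-1}\int_{p_j^2}^{p_{j+1}^2} F(t)\,dt$ because $F$ is constant on each $s_j$, and the fractional term $\frac{x-p_k^2}{l_k}\tilde\pi_k$ equals $\int_{p_k^2}^{x} F(t)\,dt$ for the same reason. Telescoping gives the identity.

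Next I would invoke Mertens' theorem in the uniform form supplied by \eqref{eq:mertenerror} in the proof of Lemma~\ref{lem:MPNT1}: $F(t)\log t \to 2\e^{-\gamma}$ as $t\to\infty$, so for any $\varepsilon>0$ there is a $t_0$ such that for every $t\ge t_0$,
\[
(1-\varepsilon)\,\frac{2\e^{-\gamma}}{\log t} \;\le\; F(t) \;\le\; (1+\varepsilon)\,\frac{2\e^{-\gamma}}{\log t}.
\]
Integrating these bounds on $[t_0,x]$, bounding the contribution from $[4,t_0]$ by an $\varepsilon$-dependent constant, and using $\li(x)\to\infty$, one obtains
\[
(1-\varepsilon)\,2\e^{-\gamma}\li(x) + O_\varepsilon(1) \;\le\; \tilde\pi(x) \;\le\; (1+\varepsilon)\,2\e^{-\gamma}\li(x) + O_\varepsilon(1).
\]
Dividing through by $2\e^{-\gamma}\li(x)$ and letting $\varepsilon\to 0$ closes the argument.

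The main obstacle is almost entirely bookkeeping. One must confirm that the discontinuities of $F$ line up exactly with the breakpoints $p_k^2$ used in defining $\tilde\pi(x)$, so that the integral identity is genuinely an equality rather than an approximation; and one must use the fact that Mertens supplies its estimate uniformly in $t$, not merely at the sieving breakpoints, which \eqref{eq:mertenerror} indeed does. No delicate cancellation or summation-of-errors estimate arises, because the piecewise-constant structure of $F$ lets the asymptotic for $\li(x)$ pass directly through the integral.
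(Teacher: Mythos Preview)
Your proof is correct and takes a genuinely different route from the paper. The paper argues in two steps: it first applies Lemma~\ref{lem:MPNT1} term by term to the defining sum \eqref{eq:tildepi_x}, obtaining $\tilde\pi(x)\sim 2\e^{-\gamma}\bigl(\sum_{j<k} l_j/\log p_{j+1}^2 + (x-p_k^2)/\log p_{k+1}^2\bigr)$, and then identifies this expression as a Riemann sum for $\li(x)$ via the squeeze $l_j/\log p_j^2 > \li_j > l_j/\log p_{j+1}^2$ together with $l_j/\log p_j^2 \sim l_j/\log p_{j+1}^2$. You instead observe that $\tilde\pi(x)$ is \emph{exactly} the integral $\int_4^x F(t)\,dt$ of the step function $F(t)=\prod_{p\le\sqrt t}(1-1/p)$, and then apply Mertens' theorem pointwise and uniformly in $t$ to pass directly to $2\e^{-\gamma}\li(x)$ via a standard $\varepsilon$--$t_0$ argument.

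What your approach buys is rigour and economy: the integral identity is exact, so the passage to the limit requires only the uniform Mertens estimate and the divergence of $\li(x)$, with no need to justify summing infinitely many termwise asymptotics (a step the paper leaves implicit). What the paper's approach buys is that it routes through Lemma~\ref{lem:MPNT1}, which has independent interest elsewhere in the paper, and it makes visible the discrete Riemann-sum structure $\sum_j l_j/\log p_{j+1}^2$ that the paper later compares numerically against $\li(x)$. Both arguments are short; yours is the cleaner analytic proof, while the paper's keeps the intermediate quantity $l_k/\log p_{k+1}^2$ in the foreground.
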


\begin{proof} 
First, applying \lemref{lem:MPNT1} to \eqref{eq:tildepi_x} we immediately have that 
\begin{align*}
	\tilde \pi(x) 
	\sim 2 \e^{-\gamma} \left(  \sum_{j=1}^{k-1} \frac{l_j}{\log p_{j+1}^2}
			+  \frac{x-p_k^2}{l_k} \, \frac{l_k}{\log p_{k+1}^2} \right).
\end{align*}
The expression within parentheses is nothing but a Riemann sum, which in the continuum limit can be approximated by the logarithmic integral $\li(x)$: The logarithmic integral taken over the interval $s_k$ satisfies
\begin{align*}
	\frac{l_k}{\log {p_{k}^2} } > \li_k > \frac{l_k}{\log {p_{k+1}^2} }, 
\end{align*}
and also,
\begin{align*}
	\frac{l_k}{\log {p_{k}^2} }  \sim \frac{l_k}{\log {p_{k+1}^2} }.
\end{align*}
We therefore have that 
\begin{align*}
	\tilde \pi_k \sim 2 \e^{-\gamma} \li_k,
\end{align*}
and eventually,
\begin{align*}
	\tilde \pi(x) 
	\sim 2 \e^{-\gamma} \left(  \sum_{j=1}^{k-1} \li_j
			+  \frac{x-p_k^2}{l_k} \, \li_k \right)
	= 2 \e^{-\gamma}  \li(x).			
\end{align*}
\end{proof}

We emphasise here again that the above results are derived under the sole assumption that $s_k$ is sieved by the $k$ first primes. In this case, the best we can do is to assign a uniform probability of finding a prime in any position across $s_k$. This probability is given exactly by $\prod_{p \in \mathcal P_k} \left(1- \frac{1}{p}\right)$, or approximately by $2 \e^{-\gamma}/\log p_{k+1}^2$, Obviously, there is more to say about $s_k$---illustrated with a few examples in the next section---and this additional information is what eventually will close the gap between $\tilde \pi_k$ and $\pi_k$,
where the latter is anticipated to satisfy 
\begin{align*}
	\pi_k \sim \frac{l_k}{\log p_{k+1}^2}.
\end{align*}

It is also worth noting that the expression $2 \e^{-\gamma} \li(x)$ emerges as the continuum approximation to the discrete sum $2 \e^{-\gamma} \sum_{j=1}^k l_j/\log p_{j+1}^2$, and not the other way around. This fact hints to the possibility that even the prime counting function $\pi(x)$ is best approximated by $\sum_{j=1}^k l_j/\log p_{j+1}^2$ rather than $\li(x)$. We examine this in closer detail in \secref{sec:numresults}.

\section{Shrinking the gap between $\tilde \pi_k$ and $\pi_k$}
\label{sec:shrinkGap}
Recall from \secref{sec:notation} that the probabilistic prime counting function $\tilde \pi_k$ can be stated as 
\begin{align*}
	\tilde \pi_k =  \frac{1}{p_k\#} \sum_{j=0}^{p_k\#-1}  S(s_k^j,p_k\#),
\end{align*}
while $\pi_k$ is given by
\begin{align*}
	\pi_k = S(s_k^0,p_k\#).
\end{align*}
To close in on an estimate for $\pi_k$, we need to shrink the sample space $\Omega_k$ in such a way that it still contains $S(s_k^0,p_k\#)$. Of course, $S(s_k^0,p_k\#)$ is completely determined by where in the natural numbers the interval $s_k$ is situated (allowing for shifts that are multiples of $p_k\#$), so the constraints on $\Omega_k$ must reflect this fact. While defining the right constraints is an essential part of sieve theory, our point here is only to illustrate with a few numerical examples how imposing constraints affect the probabilistic estimate $\tilde \pi_k$.

The most obvious constraint is that all elements in $R_k(n)$ must be strictly smaller than $p_{k+1}^2$ whenever $n\in s_k$. We can observe the effect of this constraint by expanding the Euler product \eqref{eq:tildepi_k2} in terms of the the M\"obius function, which is defined by 
\begin{align}
\mu(n):=
\begin{cases}
1 \quad \textrm{if } n=1, \\
(-1)^k \quad \textrm{if $n$ is a product of $k$ distinct primes},\\
0 \quad \textrm{if $n$ has one or more repeated prime factors}.
\end{cases}
\label{eq:mobius}
\end{align}
%
It follows that we can rewrite \eqref{eq:tildepi_k2} as
\begin{align*}
	\tilde \pi_k = 
	l_k  \cdot \sum_{ \substack{ {d \mid p_k\#}} } \frac{\mu(d)}{d}.
\end{align*}
Adding the constraint produces a truncated version, which we denote
\begin{align*}
	\tau_{k} = 
	l_k  \cdot \sum_{ \substack{ {d \mid p_k\#}\\{d < p_{k+1}^2}} } \frac{\mu(d)}{d}
\end{align*}
Numerically---as seen in \figref{fig:legendreTruncated}---we verify that the truncated expression amounts to a significant reduction of the sample space $\Omega_k$, resulting in 
$\tau_{k}/(l_k/\log p_{k+1}^2) \approx 1.03$, as opposed to $\tilde \pi_k/(l_k/\log p_{k+1}^2)\sim 2\e^{-\gamma} \approx 1.12$. In addition, the error term, which is $O(2^k)$ in the case of $\tilde \pi_k$, reduces to $O(k^{2.32})$ in the case of $\tau_{k}$.

\begin{figure}[h]
	\centering
	\includegraphics[width=250pt]{./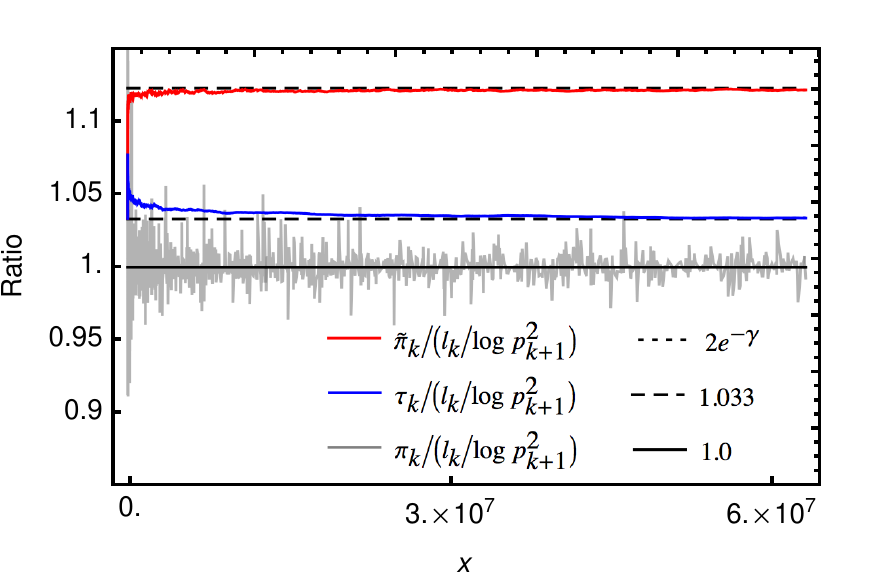}
\caption{
\small \sl
The ratios of $\pi_k$, $\tilde \pi_k$, and $\tau_k$ to $l_k/\log p_{k+1}^2$. The values are plotted at $x=p_{k+1}^2$, $1\leq k \leq 1000$. The three horizontal lines illustrate the limits tended to by each ratio. 
}
\label{fig:legendreTruncated}
\end{figure}
Our second example is a set of constraints on the internal structure of $s_k$: Within $s_k$, the first appearance of a composite divisible by $p_i$, for $i<k$, is in one of the positions $p_i - m+1$, where $m>0$ is an element in the residue class of $n^2 \imod{p_i}$, $j> i$ ($m=0$ is not included, since  the first position in $s_k$ is always occupied by $p_k^2$). For instance, given the primes $p_1,\dots,p_6$, these are the possible positions of first appearance when $k>6$:
\vspace{2mm}
\begin{center}	
\begin{tabular}{cl}
Prime              		& Candidate positions of first appearance in $s_k$\\
\hline
$p_1$ 				& 2  \\
$p_2$ 				& 3  \\
$p_3$				& 2, 5  \\
$p_4$ 				& 4, 6, 7  \\
$p_5$ 				& 3, 7, 8, 9, 11  \\
$p_6$ 				& 2, 4, 5, 10, 11, 13 
\vspace{1.5mm}
\end{tabular}
\end{center}
The outcome is that these constraints bound the possible positions of sequences $\rho_i(n)$ across $s_k$, $1\leq i \leq k$, effectively halving the sample space $\Omega_k$. 

To get an impression of how these constraints impact the probabilist prime counting function $\tilde \pi_k$, we randomly sample sequences of $\rho_i$ of length $l_k$, $1\leq i \leq k$, with first appearance of $p_i$ in each sequence constrained as explained above, and count the number of 1s in the resulting $R_k$---denoted $\eta_k$. We observe in \figref{fig:residues} that the mean value of $\eta_k$ in fact appears to lie slightly above $\tilde \pi_k$, while still satisfying $\eta_k \sim \tilde \pi_k$. Note that in the figure we plot $\sum_{j=1}^k \eta_j$ rather than $\eta_k$ as the accumulation of biases is more visible than the individual biases in each interval. It appears therefore that the statistical properties of the reduced sample space do not change significantly from the original one, which can be understood from the fact that these constraints do not restrain the position of $s_k$ within the natural numbers.

\begin{figure}[h]
	\centering
	\includegraphics[width=358pt]{./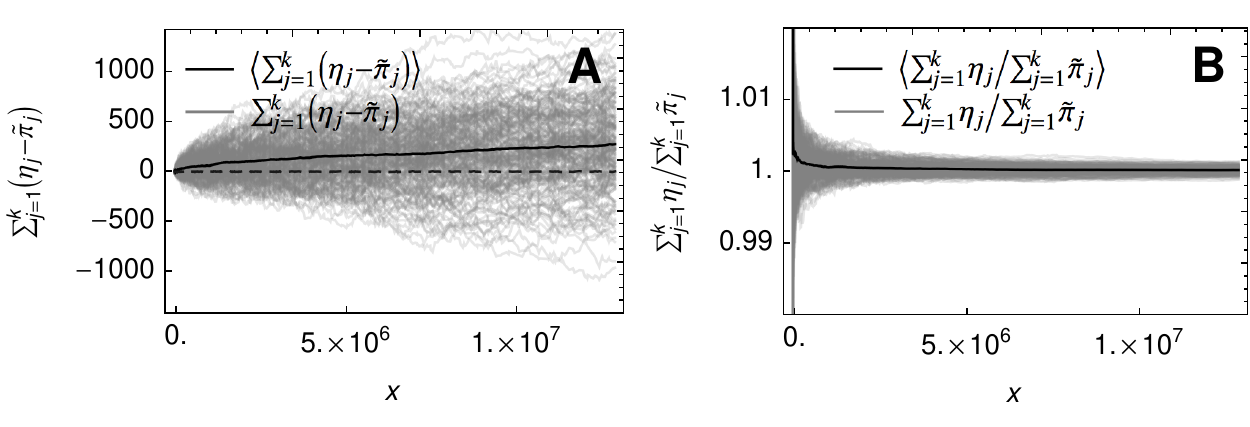}
\caption{
\small \sl
Comparison of $\sum_{j=1}^k \eta_j$ and $\sum_{j=1}^k \tilde \pi_j$, with values plotted at $x=p_{k+1}^2$, $1\leq k \leq 500$. A) 200 samples of $\sum_{j=1}^k (\eta_j - \tilde \pi_j)$ (grey) shown together with the mean value $\langle \sum_{j=1}^k (\eta_j - \tilde \pi_j)\rangle$ (black), demonstrating a statistical bias of 
$\eta_k > \tilde \pi_k$.
 B) 200 samples of $\sum_{j=1}^k  \eta_j / \sum_{j=1}^k \tilde \pi_j$ (grey) shown together with the mean value $\langle \sum_{j=1}^k  \eta_j / \sum_{j=1}^k \tilde \pi_j \rangle$ (black), indicating that  $\eta_k \sim \tilde \pi_k$.
}
\label{fig:residues}
\end{figure}

\section{Conjectures on the distribution of primes in short intervals}
\label{sec:newPNT}
The prime number theorem, proven independently by Hadamard and de la Vall\'{e}e-Poussin in 1896, is usually stated by either of the equivalent relations
\begin{align*}
	\pi(x) \sim \frac{x}{\log x} \quad \textrm{or} \quad \pi(x) \sim \li(x). 
\end{align*}

The results so far, however, motivates an alternative formulation of the prime number theorem, in the form of the conjecture:
\begin{conjecture}
\label{con:pntlocal}
As $k \rightarrow \infty$, we have that
\begin{align*} 
	\pi_k \sim  \frac{l_k}{\log p_{k+1}^2}.
\end{align*}
\end{conjecture}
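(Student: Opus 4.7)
The plan is to work within the sieve framework developed in Sections 2 and 4. Starting from the exact identity $\pi_k = S(s_k, p_k\#)$ and expanding by Möbius inversion, one obtains
\begin{align*}
\pi_k = \sum_{\substack{d \mid p_k\# \\ d < p_{k+1}^2}} \mu(d)\, |\{n \in s_k : d \mid n\}|,
\end{align*}
since multiples of any squarefree $d > p_{k+1}^2$ cannot lie inside $s_k$. Comparing with $\tilde \pi_k \sim 2 \e^{-\gamma} l_k / \log p_{k+1}^2$ from \lemref{lem:MPNT1}, the excess Mertens factor $2 \e^{-\gamma}$ must be absorbed by the discrepancy terms that encode the specific position of $s_k$ among the integers. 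The target is therefore to show that these discrepancies reduce the main term by a factor of exactly $1/(2 \e^{-\gamma})$ in the limit.

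My first concrete step would be to prove that the truncated Möbius sum $\tau_k = l_k \sum_{d \mid p_k\#,\, d < p_{k+1}^2} \mu(d)/d$ from Section 4 already satisfies $\tau_k \sim l_k / \log p_{k+1}^2$. This is plausible because the truncation at $p_{k+1}^2$ enforces that the sieve sees only those divisors genuinely relevant to $s_k$, precisely removing the parity-type overcounting that produces the Mertens constant; numerically the ratio $\tau_k / (l_k / \log p_{k+1}^2)$ is already close to $1$, as shown in \figref{fig:legendreTruncated}. Granting this asymptotic, the remaining step is to control $|\pi_k - \tau_k|$, which is a $\mu$-weighted sum of integer discrepancies $|\{n \in s_k : d \mid n\}| - l_k/d$ over $d \mid p_k\#$ with $d < p_{k+1}^2$. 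The required bound is $|\pi_k - \tau_k| = o(l_k / \log p_{k+1}^2) = o(p_k / \log p_k)$.

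The hardest step will be precisely this discrepancy bound. Each individual term is $O(1)$, but there are up to $2^k$ summands, so cancellation must be exploited at a very fine level; as presently formulated this seems essentially equivalent to establishing the prime number theorem in intervals of length $l_k \sim \sqrt{x}\, \log x$ around every $x = p_k^2$, which lies just beyond what the Riemann hypothesis directly supplies. Indeed, subtracting the RH bound $\pi(x) - \li(x) = O(\sqrt{x}\, \log x)$ at $p_k^2$ and $p_{k+1}^2$ yields only $|\pi_k - \li_k| = O(p_k \log p_k)$, which has the same order of magnitude as $\pi_k$ itself. I therefore expect a rigorous proof to require either the stronger conjecture $|\pi(x) - \li(x)| = O(\sqrt{\li(x)})$ proposed later in the paper, or a direct combinatorial argument exploiting the first-appearance constraints of Section 4 to establish the required cancellation in the discrepancy sum; failing either, the strongest present justification remains the numerical evidence combined with the heuristic picture of $\pi_k$ as a single sample drawn from the sample space $\Omega_k$.
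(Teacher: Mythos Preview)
Your proposal is a proof sketch for a statement that the paper explicitly does \emph{not} prove: immediately after stating \conref{con:pntlocal} the author writes that ``proving \conref{con:pntlocal} rigorously is out of reach for this paper,'' and offers only a heuristic justification. So there is no proof in the paper to compare against; the relevant comparison is between your heuristic route and the author's.

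The two routes are genuinely different. The paper argues indirectly, via \conref{con:pntshort}: because the distribution of primes in $s_k$ is built from the periodic sequences $\rho_j$ with periods $p_1,\dots,p_k$, an interval of length $y$ can only sample that distribution faithfully once $y \ge p_k$, i.e.\ once $y \ge x^{1/2}$; this is offered as a reason why $y = x^{1/2}$ should be the threshold for \eqref{eq:pntshort}, which in turn would imply \conref{con:pntlocal}. Your route is more direct and more quantitative: isolate the truncated M\"obius sum $\tau_k$ as the main term, then bound the discrepancy $|\pi_k - \tau_k|$.

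There is, however, a concrete gap in your first step. You propose to show $\tau_k \sim l_k/\log p_{k+1}^2$, citing \figref{fig:legendreTruncated} as evidence that the ratio is ``close to $1$.'' But the paper reports that ratio as stabilising near $1.03$, and the figure caption describes the horizontal lines as the \emph{limits} the ratios tend to. If the limiting constant is genuinely $\approx 1.03$ rather than $1$, then $\tau_k \not\sim l_k/\log p_{k+1}^2$, and your decomposition $\pi_k = \tau_k + (\pi_k - \tau_k)$ cannot succeed as stated: the discrepancy term would have to contribute a nonvanishing \emph{negative} proportion of the main term, not merely be $o(l_k/\log p_{k+1}^2)$. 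In other words, the truncation at $d < p_{k+1}^2$ removes most but apparently not all of the Mertens overshoot, so the ``parity-type overcounting'' you hope to eliminate at step one is only partially removed there.

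Your diagnosis of the second step is accurate and matches the paper's outlook: controlling $|\pi_k - \tau_k|$ (or, equivalently, $|\pi_k - \li_k|$) to the required precision is a short-interval statement at scale $y \asymp \sqrt{x}\log x$, which is not delivered by RH alone. The paper likewise falls back on the stronger conjecture $|\pi(x)-\li(x)| = O(\sqrt{\li(x)})$ and on numerical evidence; neither you nor the author closes this gap.
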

From this perspective---as in the probabilistic case in \secref{sec:probPNT}---we should view $\pi(x) \sim \li(x)$ as originating from a continuum approximation of $\sum_{j=1}^k l_j / \log p_{j+1}^2$. 

Proving \conref{con:pntlocal} rigorously is out of reach for this paper, but we present a heuristic justification that hopefully can seed a complete proof. Specifically, we argue that the following conjecture holds, which in turn implies \conref{con:pntlocal}: 
\begin{conjecture}
\label{con:pntshort} 
The choice of $y= x^{1/2}$ is necessary and sufficient for 
\begin{align}
	\pi(x + y)-\pi(x) \sim \frac{y}{\log (x+y)}
\label{eq:pntshort}
\end{align}
to hold for all $x$ as $x\rightarrow \infty$.
\end{conjecture}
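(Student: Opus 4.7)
The plan is to address sufficiency and necessity separately, using the sieving framework of the intervals $s_k$ as the backbone. For sufficiency, observe that if $x$ lies in $s_k$ then $\sqrt{x}\asymp p_k$, while the length of $s_k$ itself satisfies $l_k = p_{k+1}^2 - p_k^2 \asymp p_k\, g_k$ with $g_k = p_{k+1}-p_k$; thus an interval $[x,x+y]$ of length $y \asymp x^{1/2}$ sits across a controlled number of consecutive intervals $s_j$, at most $O(1)$ under Cram\'er-type assumptions, and in any case covers a total length $y$ with $\log p_{j+1}^2 \sim \log(x+y)$ uniformly over the relevant $j$.

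For the sufficient direction I would proceed in three steps. First, decompose $[x,x+y]$ as the disjoint union of a bounded number of pieces, each of which is either a full interval $s_j$ or a truncated initial or terminal segment of such an interval. Second, apply \conref{con:pntlocal} to each full piece, together with a within-$s_j$ equidistribution refinement applied to the truncated pieces, giving an asymptotic count proportional to the piece length divided by $\log p_{j+1}^2$. Third, use the uniform comparison $\log p_{j+1}^2 \sim \log(x+y)$ across the relevant range of $j$ and sum the contributions to recover $y/\log(x+y)$. The within-$s_j$ equidistribution needed in step two is a natural strengthening of \conref{con:pntlocal} and is supported by the probabilistic model on $\Omega_k$ from \secref{sec:notation}, in which the $1$s of $R_k$ spread uniformly across translates of $s_k$ at scales $\gg \sqrt{p_k}$.

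For necessity, I would construct, for any candidate $y(x)$ with $y(x)=o(x^{1/2})$, a sequence $x_n\to\infty$ at which \eqref{eq:pntshort} fails. The natural candidates are values $x_n$ just above a prime $p_{k_n}$ whose successor gap $g_{k_n}$ exceeds $y(x_n)$; on such a gap $\pi(x_n+y)-\pi(x_n)=0$ while $y/\log(x_n+y)\to\infty$, so the asymptotic fails. The necessity of the exponent $1/2$ thus reduces to showing that the maximal prime gap among the primes in $s_k$ is, for infinitely many $k$, of order $\sqrt{p_k}$. In the random $R_k$-model of \secref{sec:notation}, an extreme-value calculation on $\Omega_k$ predicts exactly this order: a uniformly random translate of $R_k$ across $s_k$ has a longest run of non-$1$ entries of size $\Theta(\sqrt{p_k})$ with high probability. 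Transferring this statement from the full sample space $\Omega_k$ to the arithmetic realisation $\pi_k = S(s_k^0, p_k\#)$—by arguing that this particular instance inherits the typical tail behaviour—would then close the argument.

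The main obstacle is this final transfer step in the necessity direction: the arithmetic realisation corresponds to a single fixed point in $\Omega_k$, and proving unconditionally that it shares the extreme-value statistics of a uniformly random translate seems out of reach of present methods. The same issue reappears, in dual form, in the sufficiency direction, where the within-$s_j$ equidistribution cannot be obtained from sieve theory alone (sieves incur the spurious factor $2e^{-\gamma}$ visible in \lemref{lem:MPNT1}). Both obstacles reduce to the same missing ingredient: a bona fide probabilistic theorem asserting that the true primes realise a statistically typical sample from $\Omega_k$, rather than the heuristic correspondence supported here only by numerics.
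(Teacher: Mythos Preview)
Your necessity argument has a genuine gap that cannot be repaired along the lines you propose. You aim to show, for each $y(x)=o(x^{1/2})$, that \eqref{eq:pntshort} fails by placing $x_n$ inside a prime gap longer than $y(x_n)$, so that $\pi(x_n+y)-\pi(x_n)=0$. But this would require prime gaps of order $x^{1/2-\epsilon}$ for every $\epsilon>0$, in flat contradiction to Cram\'er's conjecture and to every heuristic and numerical expectation. The failure of \eqref{eq:pntshort} in the range $(\log x)^\lambda \ll y \ll x^{1/2}$ cannot be witnessed by \emph{empty} intervals; as in Maier's theorem, it must be witnessed by intervals whose prime count deviates from $y/\log x$ by a bounded multiplicative factor. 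Your supporting extreme-value claim is also off: the density of $1$s in $R_k$ over $s_k$ is $\asymp 1/\log p_k$, and the standard heuristic gives longest non-$1$ runs of size $\Theta((\log p_k)^2)$---this is exactly Cram\'er's model---not $\Theta(\sqrt{p_k})$. Even granting your figure, $\sqrt{p_k}\asymp x^{1/4}$ for $x\in s_k$, so the argument would only obstruct $y$ up to $x^{1/4}$, not $x^{1/2}$.

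The paper's heuristic for necessity is a different mechanism altogether. It does not look for gaps but argues that an interval of length $y<p_k$ fails to span even one full period of the sequence $\rho_k$, and hence cannot sample the density $\prod_{p\le p_k}(1-1/p)$ correctly; this is a density-fluctuation argument in the spirit of Maier and is what leads to \conref{con:Maier}. On the sufficiency side your decomposition into pieces of $s_j$ is reasonable and not far from the paper's viewpoint, though the paper stresses instead the dual observation that $l_k=O(x^{1/2})$, so that $y=x^{1/2}$ is the natural scale of a single $s_k$; both the paper and you correctly flag that the within-$s_j$ equidistribution (equivalently, the removal of the sieve factor $2\e^{-\gamma}$ in \lemref{lem:MPNT1}) is an assumption of the heuristic, not a conclusion.
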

To start with, let us consider what happens to an interval that grows slower than $y=x^{1/2}$. For this purpose we define $y_{\epsilon}=y_{\epsilon}(x)$ to be a function that grows arbitrarily slower than $y$. Naturally, no matter how large $y_{\epsilon}$ is initially, as $x$ increases, $y_{\epsilon}$ eventually becomes infinitesimal compared to $y$. This has the effect that for any given integer $q>0$, we can find an $x$ such that $y_{\epsilon} \leq y/2^q$. It then follows from Bertrand's postulate---which says there is always a prime $p$ such that $n<p<2n$---that 
\begin{align*}
	\pi(y) - \pi(y_{\epsilon})  \geq \pi(y) - \pi(y/2^q) = q.
\end{align*}
And because $q$ can be arbitrarily large, 
\begin{align*}
	\lim_{x \rightarrow \infty}\pi(y) - \pi(y_{\epsilon}) = \infty.
\end{align*}
Therefore, across $s_k$ we will always find that $p_k \leq y< p_{k+1}$, while $y_{\epsilon}$ lags behind and ultimately turns infinitesimal relative to infinitely many primes smaller than $p_k$. 

Let us explore the consequence of the last statement: As described earlier, an essential property of the distribution of primes in $s_k$ is that we can view it as a construction of the periodic  sequences $\rho_j(n)$, $1\leq j \leq k$. From this perspective, it is clear that if we want to sample correctly the distribution of primes within $s_k$, we need to choose y such that the interval $[x,x+y]$ spans at least the largest underlying period, that is, $y \geq p_k$ whenever $x\in s_k$; the slowest growing function that satisfies this criteria is $y= x^{1/2}$. The interval $[x,x+y_\epsilon]$, on the other hand, is in general only long enough to provide valid samples from the distribution of coprimes to $p_m\#$, where $p_m$ is the greatest prime smaller or equal to $y_{\epsilon}$ across $s_k$. Since, from our argument above, $p_m$ eventually grows infinitesimal relative to arbitrarily many primes smaller than $p_k$, we should expect $[x, x+y_\epsilon]$ to ultimately move repeatedly across regions where the density of primes deviates significantly from that predicted by the prime number theorem. 

What this suggests is that $y=x^{1/2}$ is the sharp barrier below which the prime number theorem breaks down for all $x$, contradicting previous conjectures that this barrier lies close to $y=x^{\epsilon}$, $\epsilon >0$  \citep{Granville:1995:2, Soundararajan:2007vl}.
Furthermore, 
%
%
%
taking this reasoning to its conclusion, it seems logical to conjecture a variation of Maier's theorem \citep{Maier:1985ww} applied to any interval growing slower than $x^{1/2}$:
\begin{conjecture}
\label{con:Maier} 
Let $y_\epsilon=y_\epsilon(x)$ satisfy $\lim_{x\rightarrow \infty} y_\epsilon /y = 0$, where $y=x^{1/2}$. Then
\begin{align*}
\limsup_{x\rightarrow \infty} \frac{\pi(x+y_\epsilon) -\pi(x)}{y_\epsilon/ \log x} > 1
\qquad \textrm{and} \qquad
\liminf_{x\rightarrow \infty} \frac{\pi(x+y_\epsilon) -\pi(x)}{y_\epsilon/ \log x} < 1.
\end{align*}
%
\end{conjecture}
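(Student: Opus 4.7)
The plan is to adapt the Maier matrix method of \citep{Maier:1985ww}, extending its range from $y = (\log x)^\lambda$ down to any $y_\epsilon = o(x^{1/2})$. The underlying intuition is already in place: since $y_\epsilon$ grows slower than $\sqrt{x}$, one can always pick an auxiliary prime $p_m$ with $p_m / y_\epsilon \to \infty$ and $p_m = o(\sqrt{x})$, so that the interval $[x, x+y_\epsilon]$ still fails to span a full period of $\rho_m$. By the construction in \secref{sec:notation}, this leaves room for the density of coprimes to $p_m\#$ across such windows to deviate from its long-term mean, and the job is to convert that deviation into a fluctuation in the genuine prime count.

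The first step would be to select a function $z = z(x)$ with $y_\epsilon(x) \ll z(x) \ll \sqrt{x}$, set $Q = p_m\#$ for $p_m$ the largest prime at most $z(x)$, and build the Maier matrix whose rows are indexed by reduced residues $r \pmod{Q}$ and whose columns are indexed by $j$ in a dyadic range, with the $(r,j)$-entry counting primes in $[r + jQ,\, r + jQ + y_\epsilon]$. The second step would be to evaluate each row sum using the best available equidistribution theorem for primes in arithmetic progressions (Siegel--Walfisz in the modulus range we can afford), and to aggregate to obtain a row-averaged density matching $y_\epsilon/\log x$. The third step would be to compare this row average with the density obtained column-wise via Mertens' theorem: the latter carries the Mertens factor, and the disagreement between the two predictions depends on the ratio $\log p_m / \log \sqrt{x}$, which need not tend to $1$. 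A pigeonhole argument would then yield residues $r$, and hence starting points $x$, for which the normalised count $(\pi(x+y_\epsilon)-\pi(x))/(y_\epsilon/\log x)$ is bounded strictly above $1$, as well as others for which it is bounded strictly below.

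The principal obstacle is the equidistribution step. Maier's original argument exploits Gallagher's variance estimate, which handles primorial moduli only up to a fixed power of $\log x$. To push the construction down to arbitrarily small $y_\epsilon$ one needs an equidistribution statement well beyond current unconditional reach; a strong form of the Elliott--Halberstam conjecture, or a quantitative Dirichlet-type theorem uniform in large primorial moduli, should nevertheless suffice for the plan above to go through. A complementary route worth exploring would be to deduce the conjecture from \conref{con:pntlocal} directly, by positioning the witnessing $x$ near either end of a well-chosen interval $s_k$---so that the square $p_k^2$, or the thinner prime region near $p_{k+1}^2$, lies inside the window---and thereby exposing a local anomaly visible on the scale $y_\epsilon$ rather than only on the full scale $l_k$.
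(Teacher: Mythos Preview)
The paper does not prove this statement: it is stated as a conjecture, motivated only by the heuristic that an interval shorter than $p_k$ fails to sample the full period of $\rho_k$ inside $s_k$. There is therefore no ``paper's proof'' to compare against; your proposal is an attempt to go well beyond what the paper claims.

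Your Maier-matrix plan has a structural gap, not merely a gap in available equidistribution technology. You choose $z$ with $y_\epsilon \ll z \ll \sqrt{x}$ and set $Q = p_m\#$ with $p_m \le z$. By the prime number theorem $Q \asymp e^{z}$, so as soon as $y_\epsilon$ grows like any positive power of $x$ (say $y_\epsilon = x^{1/4}$, which the conjecture must cover), the modulus $Q$ is at least $\exp(x^{1/4})$ and hence vastly larger than $x$ itself. The Maier matrix, whose entries sit at $r + jQ \approx x$, then cannot even be laid out: there is no admissible $j$. No form of Elliott--Halberstam helps here, since EH only concerns moduli up to $x^{1-\epsilon}$, not moduli of size $\exp(x^{\theta})$.

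If instead you keep $Q$ small enough for the matrix to live near $x$ (forcing $z = O(\log x)$, the regime Maier actually uses), the Buchstab parameter $u = \log y_\epsilon / \log z$ tends to infinity whenever $y_\epsilon$ is a power of $x$, and the oscillation $\omega(u) - e^{-\gamma}$ that drives Maier's argument decays to zero. Either horn of this dilemma blocks the method for the full range $y_\epsilon = o(x^{1/2})$. This is precisely why the paper leaves the statement as a conjecture, and why it is in genuine tension with the widely believed expectation (cited in the paper's own introduction) that $y > x^{\epsilon}$ already suffices for \eqref{eq:shortPNT}.
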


So far, we have argued that $y=x^{1/2}$ is a necessary condition for \eqref{eq:pntshort} to hold for all $x$. To substantiate that $y=x^{1/2}$ is also sufficient, we make the observation that the lengths of the intervals $s_k$ are of the form $l_k=2 p_{k+1} g_k-g_k^2$, where $g_k:=p_{k+1}-p_k$, and hence lie on the curves $2 \sqrt{x} g - g^2$, with $g=2n$, $n\geq 1$. Therefore, $l_k$ grows as $O(x^{1/2})$. Let us further define $y_\delta = y_\delta(x)$ to be a function that grows arbitrarily faster than $y=x^{1/2}$, so that $\lim_{x\rightarrow \infty} y_\delta / y = \infty$. As a consequence, the interval $[x,x+y_\delta]$ will eventually cover arbitrarily many intervals $s_k$; that is, for any $m$, we can always choose $k$ so that $y_\delta(p_k^2) \geq \sum_{i=k}^{k+m} l_i $. The problem we encounter now, however, is that the estimate 
\begin{align*}
	\pi(p_k^2+ y_\delta) - \pi(p_k^2) \sim \frac{ y_\delta}{\log(p_k^2+ y_\delta)}
\end{align*}
assumes a uniform distribution of primes in the interval $[p_k^2,p_k^2+y_\delta]$, given by the density of primes in $s_m$. But this assumption is inaccurate; the largest intervals across which we can suppose a uniform distribution are the intervals $s_k$. Subdividing the natural numbers into intervals of length $y_\delta$ and estimating the number of primes up to $x$ from these results in an underestimate of the prime counting function $\pi(x)$. At its most extreme, this is exemplified by the estimate $\pi(x) \sim x/\log x$, which is well known to be an inferior guess of the number of primes up to $x$ compared to $\pi(x) \sim \li(x)$, a fact that was established in 1899 by de la Vall\'{e}e-Poussin \citep{Poussin:1899}. 

One final note about this heuristic. We have only argued for what lengths $y$ can take, in order for the interval $[x,x+y]$ to correctly sample the distribution of primes, but not what is the correct limiting value of $\pi(x+y) - \pi(x)$. The asymptotic limit in \conref{con:pntshort} should therefore be considered as an assumption of the heuristic as well. 

\section{Numerical results on the distribution of primes}
\label{sec:numresults}

With the purpose of providing clues about how to understand the apparent random behaviour of the primes, we here examine the distribution of primes both locally and globally. We start by noting that since $l_k = 2 p_{k+1} g_k - g_k^2$, where $g_k = p_{k+1} - p_k$, we have that 
\begin{align*}
	\pi_k \sim \frac{l_k}{\log p_{k+1}^2} = \frac{2 p_{k+1} g_k - g_k^2}{\log p_{k+1}^2}.
\end{align*}
Naturally, $g_k$ is not unique, suggesting that we can subdivide the set of intervals $s_k$ according to what the corresponding gap $g_k$ is. Hence, the subset of intervals $s_k$ all with corresponding gap $g$ will be distributed along the curve
\begin{align*}
	\frac{2 \sqrt{x} g - g^2}{\log x},
\end{align*}
where $g$ takes the values $2n$, $n\geq 1$. This structure becomes apparent when plotting the values of $\pi_k$ as a function of $p_{k+1}^2$, as illustrated in \figref{fig:pikdistribution}, where all values of $\pi_k$ are plotted for $1\leq k \leq 6 \times 10^5$.

%
%
%
%
%

While this perspective clearly reveals how the intervals $s_k$ define an underlying pattern of the distribution of primes, it is possible to provide an even more compact viewpoint. According to a conjecture of Montgomery and Soundararajan \citep{Montgomery:2004de}, we should expect the primes in $s_k$ to follow a normal distribution with mean 
\begin{align*}
	\mu_k = \frac{l_k}{\log p_{k+1}^2}
\end{align*}
and standard deviation
\begin{align*}
	\sigma_k = 
		\frac{\sqrt{l_k (\log (p_{k+1}^2/l_k)+B)}}{\log p_{k+1}^2},
\end{align*}
where $B=1 - \gamma - \log 2\pi$. Actually, 
\begin{align*}
	\frac{\sqrt{l_k (\log (p_{k+1}^2/l_k)+B)}}{\log p_{k+1}^2}
	\sim 
	\frac{\sqrt{l_k \log (p_{k+1}^2/l_k)}}{\log p_{k+1}^2},
\end{align*}
but the latter expression does not give the same numerical accuracy for the data we have available, so we stick with $\sigma_k$ as defined. 

Next, we apply Montgomery and Soundararajan's conjecture to generate normalised versions of the prime counting functions $\pi_k$: 
\begin{align*}
	\overline{\pi}_k := \frac{\pi_k - \mu_k}{\sigma_k}.
\end{align*}
Assuming the conjecture holds, the result of this step should be that $\overline{\pi}_k$ is normally distributed with mean 0 and standard deviation 1. This expectation is accurately confirmed by \figref{fig:mongomery}, where we (A) plot $\overline{\pi}_k$ as a function of $k$ and (B) display a histogram revealing the probability distribution of $\overline{\pi}_k$. As such, this result lends support to Montgomery and Soundararajan's conjecture. But more importantly, it promotes the fundamental idea that we can view $\pi_k$ as a random variable with variance $\mu_k$ and standard deviation $\sigma_k$.

\begin{figure}
\centering
\includegraphics[width=8.7cm]{./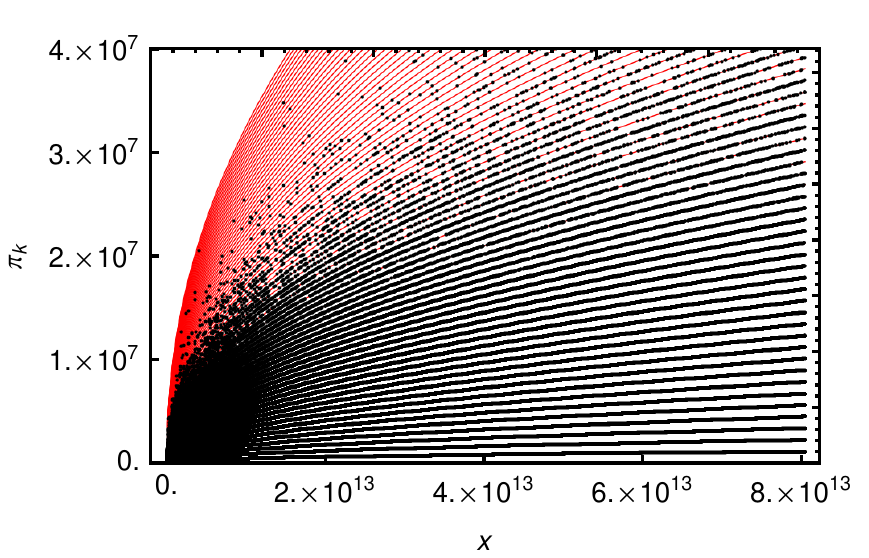}
\caption{
\small \sl
The number of primes $\pi_k$ in each interval $s_k$ plotted at the values $x = p_{k+1}^2$,  $1\leq k \leq 6 \times 10^5$ (black dots). The emerging curves relate to the specific gap values $g$, $g\in\{2,4,\dots\}$, where the bottom curve corresponds to the set of intervals $s_k$ where $g_k=2$, the second bottom $g_k=4$, etc. The red curves show the corresponding theoretical estimates $(2\sqrt{x}g - g^2)/ \log {x}$.
}
\label{fig:pikdistribution}
\end{figure}
\begin{figure}
\centering
\includegraphics[width=360pt]{./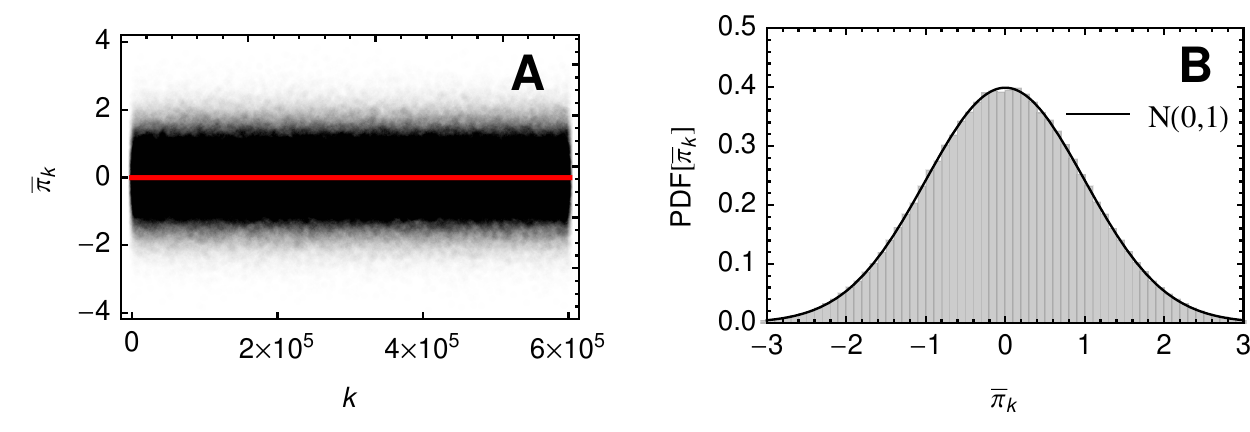}
\caption{
\small \sl
(A) The normalised prime counting functions $\overline{\pi}_k$ plotted as a function of $k$,  $1\leq k \leq 6 \times 10^5$. (B) A histogram displaying the numerical probability distribution of $\overline{\pi}_k$, overlaid with the PDF of the normal distribution $N(0,1)$ (black). The measured mean and standard deviation of $\overline{\pi}_k$ are $0.000(7)$ and $1.000(8)$, respectively.
}
\label{fig:mongomery}
\end{figure}


We turn now to the global distribution of primes, where we want to examine the behaviour of the prime counting function $\pi(x)$ when it is expressed in terms of the individual prime counting functions $\pi_k$:
\begin{align*}
\pi(p_{k+1}^2) = \sum_{j=1}^k \pi_j.
\end{align*}
To best get an impression of how $\pi(x)$ behaves, we plot---rather than $\pi(x)$ itself---the error function
\begin{align*}
	\epsilon(x):= \pi(x) - \li(x),
\end{align*}
which produces the result seen in \figref{fig:pix}. The first observation we make is that the error function starts out being negative---a famed bias that continues for the full stretch of our data set. Nonetheless, as Littlewood proved \citep{Littlewood:1914}, if we persist in moving $x$ towards infinity, the error term will eventually shift sign, and it will continue to do so infinitely many times over. When that happens for the first time, however, no one knows, though it has been proved that $x=1.39822\times 10^{316}$ is an upper bound for the event \citep{Bays2000}. A probabilistic measure of the bias was provided in 1994, when Rubinstein and Sarnak \citep{Rubinstein:1994vp}---assuming the Riemann hypothesis--- calculated the logarithmic density of those $x$ such that $\pi(x) - \li(x)>0$ to be around $0.00000026$.

\begin{figure}
\centering
\includegraphics[width=8.7cm]{./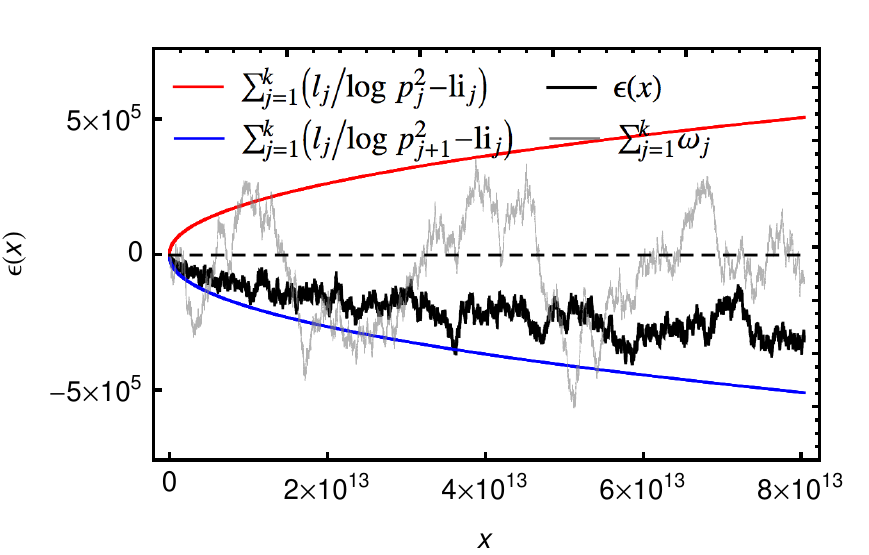}
\caption{
\small \sl
The error function $\epsilon(x)$ plotted at the values $x = p_{k+1}^2$, $1\leq k \leq 6\times10^5$ (black). Also shown are the differences 
$\sum_{j=1}^k (l_j/\log p_{j}^2 - \li_j)$ (red), and $\sum_{j=1}^k (l_j/\log p_{j+1}^2 - \li_j)$ (blue), as well as one realisation of the sum $\sum_{j=1}^k \omega_j$ (grey), where the $\omega_k$s are independent random variables each satisfying $N(0,\sigma_k)$.
}
\label{fig:pix}
\end{figure}

According to \conref{con:pntlocal}, however, $\li(x)$ is a continuum approximation to the discrete sum $\sum_{j=1}^k  l_j/\log {p_{j+1}^2}$, and therefore we should expect the latter to be a better estimate of $\pi(x)$. As we can tell from \figref{fig:pix}, where we also have included curves of $\sum_{j=1}^k  (l_j/\log {p_{j}^2} - \li_j)$ and $\sum_{j=1}^k  (l_j/\log {p_{j+1}^2} - \li_j)$, the statistical evidence seems to indicate that the truth lies somewhere in between,  
\begin{align*}
	\sum_{j=1}^{k} \frac{l_j}{\log {p_{j+1}^2} }<E[\pi(p_{k+1}^2)]<\li(p_{k+1}^2).
\end{align*}

To more accurately interpret this observation, we normalise the curves in \figref{fig:pix} by 
\begin{equation*}
	\Delta_k :=  \frac{1}{2}\sum_{j=1}^{k} \left( \frac{l_j}{\log {p_{j}^2} } - \frac{l_j}{\log {p_{j+1}^2} }\right),
\end{equation*}
which places the normalised differences 
\begin{equation*}
	\left[\sum_{j=1}^{k} \left( \frac{l_j}{\log {p_{j+1}^2} } - \li_j \right)\right]/{\Delta_k} 
	\quad \textrm{and} \quad 
	\left[\sum_{j=1}^{k} \left( \frac{l_j}{\log {p_{j}^2} } - \li_j \right)\right]/{\Delta_k} 
\end{equation*}
approximately at the constant lines -1 and 1 respectively. We also normalise the error function, defined by
\begin{align*}
	E_k:= \frac{\epsilon(p_{k+1}^2)}{\Delta_k}.
\end{align*}
The resulting curves are shown in \figref{fig:pixnormalized}A, plotted as functions of $k$. We note that on the scale of the data available, $E_k$ fluctuates fairly stable around a mean value of $-0.60$. This becomes even more apparent if we display the data in histogram form, as done in \figref{fig:pixnormalized}B, where we see that the empirical probability distribution of $E_k$ resembles a normal distribution. A natural speculation therefore, is whether there in fact  exists a constant $c$ such that 
\begin{align*}
	\E\left[\pi\left(p_{k+1}^2\right)\right]=c\cdot \li\left(p_{k+1}^2\right)-(1-c)  \cdot  \sum_{j=1}^k \frac{l_j}{\log {p_{j+1}^2} }.
\end{align*}

\begin{figure}
\centering
\includegraphics[width=360pt]{./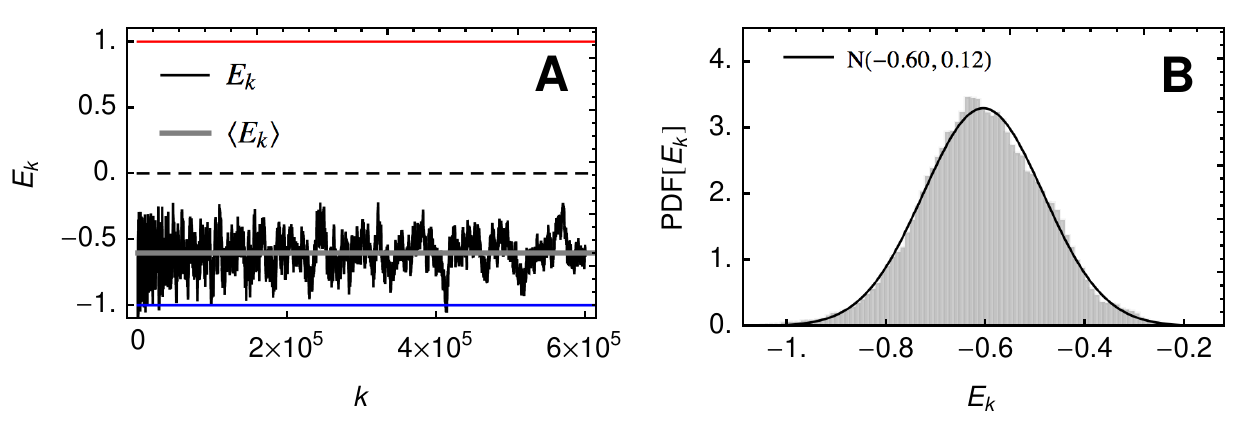}
\caption{
\small \sl 
The normalised error function $E_k$ plotted as a function of $k$, $1\leq k \leq 6\times10^5$ (A), as well as a probability density histogram of all values of $E_k$ (B). Also displayed in (B) is the PDF for a normal distribution with mean $-0.60(4)$ and standard deviation $0.12(1)$ estimated from all $E_k$.
}
\label{fig:pixnormalized}
\end{figure}

Returning to Figure 5, we end with one final important observation: The error function $\epsilon(x)$ does not conform to a sum of independent random variables. This is clearly visible when comparing with an example of an actual sum of independent random variables $\omega_k$, where 
$\omega_k\sim N(0,\sigma_k)$; the error function does not display the same scale of fluctuations as $\sum_k \omega_k$, and as already discussed, lingers more densely around a mean value. As will be apparent later, what causes the discrepancy between the error function $\epsilon(x)$ and $\sum_{j=1}^k \omega_j$ are correlations between the prime counting functions $\pi_k$. These correlations even grow stronger for larger interval lengths $l_k$, possibly explaining why the normalised error function $E_k$ in \figref{fig:pixnormalized}A seems to fluctuate more slowly as $k$ becomes large. Explaining these correlations and their effect on the error function will be the focus of the next sections. But for now, we summarise the numerical insights by the following remark:

\begin{rem}
The prime counting function $\pi(x)$ behaves---and should be understood---as a sum of \emph{correlated} random variables $\pi_k$, each normally distributed with mean $\mu_k$ and standard deviation $\sigma_k$ as conjectured in \citep{Montgomery:2004de}.
\end{rem}

\section{A random model for the distribution of primes}
\label{sec:randommodel}
With the understanding we have developed so far, we now have in hand enough information to construct a random model of the prime counting function $\pi(x)$. We start with emphasising the fundamental observation underlying the model; the exact distribution of primes up to $p_{k+1}^2$ is in its entirety build up of the $k$ periodic sequences $\rho_j(n)$, $1\leq j \leq k$, each having the property that $\rho_j(0)=p_j$. We visualise this in the following table: 
{\scriptsize
$$
\begin{tabular}{c|ccccccccccccccccc}
\obsgray{$n$} & \bf{0} & 1 & 2 & 3 & 4 & 5 & 6 & 7 & 8 & 9 & 10 & 11 & 12 & 13 & 14 & 15 & $\cdots$\vspace{1mm}\\
\hline  \\ [-3mm]
\obsgray{$\rho_1(n)$}  &$\boldsymbol{p_1}$ & 1 & $p_1$ & 1 & $p_1$ & 1 & $p_1$ & 1 & $p_1$ & 1 & $p_1$ & 1 & $p_1$ & 1 & $p_1$ & 1 &  $\cdots$\\
\obsgray{$\rho_2(n)$}  &$\boldsymbol{p_2}$ & 1 & 1 & $p_2$ & 1 & 1 & $p_2$ & 1 & 1 & $p_2$ & 1 & 1 & $p_2$ & 1 & 1 & $p_2$ & $\cdots$ \\
\obsgray{$\rho_3(n)$}  &$\boldsymbol{p_3}$ & 1 & 1 & 1 & 1 & $p_3$ & 1 & 1 & 1 & 1 & $p_3$ & 1 & 1 & 1 & 1 & $p_3$ &$\cdots$\\ 
\obsgray{\vdots}&\vdots&\vdots&\vdots&\vdots&\vdots&\vdots&\vdots&\vdots&\vdots&\vdots&\vdots&\vdots&\vdots&\vdots&\vdots&\vdots&
$\cdots$\\
\obsgray{$\rho_k(n)$}  &$\boldsymbol{p_k}$ & 1 & 1 & 1 & 1 & 1 & 1 & 1 & 1 & 1 & 1 & 1 & 1 & 1 & 1 & 1 &$\cdots$
\end{tabular}
$$\\
}
\noindent Importantly---and already discussed at length---within $s_k$, only the $k$ first sequences $\rho_j$, $1\leq j \leq k$, are needed to completely determine the distribution of primes. Note also that the distribution of primes in $s_k$ correlates with the distribution of primes in all previous intervals $s_i$, $1\leq i < k$, due to the periodicities of $\rho_j$, $1\leq j \leq i$.

In formulating the model---which we hereafter refer to as the {\it correlated random model}---the essential idea is to keep the intervals $s_k$, but to allow translations of the sequences $\rho_j$, $1\leq j \leq k$. By this approach, we stay close to the original structure of the primes, but will eventually be able to get a theoretical grip on the correlations between intervals; as becomes apparent, this is exactly what we need to give a proper explanation of the error function $\epsilon(x)$.  

The central object in the model is the random variable $\tilde \Pi(p_{k+1}^2)$, which is defined in terms of a sum over the random variables $\tilde \Pi_j$:
\begin{align*}
	\tilde \Pi(p_{k+1}^2) := \sum_{j=1}^k \tilde \Pi_j.
\end{align*}
To generate the random variables $\tilde \Pi_j$, $1\leq j \leq  k$, we randomly choose $k$ corresponding integers $m_j$, where $m_j \in \{0,1, \dots, p_j-1\}$ and then define the translated sequences
\begin{align*}
\hat{\rho}_{j}(n):=\rho_k(n+m_j).
\end{align*}
As in \secref{sec:notation}, we construct a second arithmetic function; here with the purpose of locating all $n$ such that $n+m_i$, $1\leq i \leq j$, is coprime to $p_j\#$:
\begin{align*}
	  \hat{R}_{j}(n) :=   \prod_{1\leq i \leq j} \hat{\rho}_{i}(n).
\end{align*}
The random variables $\tilde \Pi_j$ are now obtained by
\begin{align*}
	\tilde \Pi_j := |\{n: n\in s_j, \hat{R}_j(n)=1 \}|.
\end{align*}
Since the sequences $\hat{\rho}_j$, $1 \leq j \leq k$, are drawn independent from each other, the expectation value of $\tilde \Pi_j$ is given by
\begin{align*}
	\E\left[\tilde \Pi_j\right] = \tilde \pi_j = l_j \cdot \prod_{p\in \mathcal{P}_j} \left(1-\frac{1}{p} \right) 
				\sim 2 \e^{-\gamma} \frac{l_j}{\log p_{j+1}^2}
				\sim 2 \e^{-\gamma} \li_j,
\end{align*}
and correspondingly,
\begin{align*}
	\E\left[\tilde \Pi(p_{k+1}^2)\right] = \sum_{j=1}^k \tilde \pi_j \sim 2 \e^{-\gamma} \sum_{j=1}^k \li_j = 2 \e^{-\gamma}  \li(p_{i+1}^2).
\end{align*}
	
Note that the model as stated does not produce the correct mean. This can easily be overcome be introducing the random variables 
\begin{align*}
	\Pi_j := \frac{\e^{\gamma}}{2} \tilde \Pi_j \quad \textrm{and} \quad \Pi(p_{k+1}^2) := \sum_{j=1}^k \Pi_j.
\end{align*}
However, our interest is not in the mean, but in the variance produced by this model, and for that purpose it is not essential whether the mean differs by a constant; we still expect the primes to be constrained by the same variance. 

To continue, we write the variance of $\tilde \Pi(p_{k+1}^2)$ as
\begin{align*}
	\Var \left[\tilde \Pi(p_{k+1}^2) \right] 
	&= \E\left[ \left( \sum_{j=1}^k \left(\tilde \Pi_j - \tilde \pi_j \right) \right)^2\right]\\
	&= \sum_{j=1}^k \E\left[\left(\tilde \Pi_j - \tilde \pi_j \right)^2 \right] 
	+ 2 \sum_{1\leq i<j \leq k} \E\left[\left(\tilde \Pi_i - \tilde \pi_i \right)\cdot \left(\tilde \Pi_j - \tilde \pi_j \right) \right].
\end{align*}
As we see, the first expression after the last equality is a sum over the individual variances, while the second expression contains all covariance terms; they are all appearing twice, thereby explaining the factor 2. Crucially, we can express the variance theoretically, and will do so in the next section. 

For comparison, we also consider what we will refer to as the {\it uncorrelated random model}, where the sequences $\hat{\rho}_j$, $1\leq j \leq k$, are drawn anew for each interval $s_k$. We formulate this model in terms of the random variables
\begin{align*}
	\hat{\Pi}_j, \, 1\leq j \leq k, \quad \textrm{and} \quad \hat{\Pi}(p_{k+1}^2) := \sum_{j=1}^k \hat{\Pi}_j.
\end{align*}
The expectation values of these random variables are the same as above for the correlated random model, but now the covariances between intervals are zero, so that the variance is simply 
\begin{align*}
	\Var \left[\hat{\Pi}(p_{k+1}^2) \right] 
	= \sum_{j=1}^k \E\left[\left(\hat{\Pi}_j - \tilde \pi_j \right)^2 \right].
\end{align*}

To illustrate the behaviour of the random models, we generate 275 realisations of $\tilde \Pi(p_{k+1}^2)$, as well as 275 realisations of $\hat{\Pi}(p_{k+1}^2)$, for $1 \leq k \leq 10^4$. The result is displayed in \figref{fig:randommodels}, where the different realisations are plotted with their mean subtracted. Notable is that the correlated random model exhibits less variance than the uncorrelated model, indicating that the covariance terms in the correlated model in general are negative. In the same figure, we also plot the error function 
\begin{align*}
\epsilon(p_{k+1}^2) := \pi(p_{k+1}^2) - \li(p_{k+1}^2)
\end{align*}
as well as the adjusted error function  
\begin{align*}
\epsilon_0(p_{k+1}^2) := \pi(p_{k+1}^2) - 	\left[c\cdot \li\left(p_{k+1}^2\right)-(1-c)  \cdot  \sum_{j=1}^k \frac{l_j}{\log {p_{j+1}^2} }\right],
\end{align*}
where $c\approx 1-0.604$. It is evident that both of these error functions display a behaviour that is closer to that of the correlated model.

It is worth contrasting the correlated random model with the famous Cramer random model \citep{Cramer:1936ud} (see also \citep{Granville:1995} for a thorough review of Cramer's work). In Cramer's model each integer $n$, $1\leq n \leq x$, has the probability $1/ \log n$ of being prime, which gives a correct estimate of the number of primes across long enough intervals. While the model has proved very valuable in conjecturing different properties of the prime numbers, for example on the distribution of prime gaps, it also has the apparent weakness that it does not preserve certain central characteristics of the primes, such as the fact that an even number other than 2 can never be prime. In our suggested model, on the other hand, we move away from looking at each integer. Rather, the number of primes in each interval $s_k$ is treated as a random variable, and the correlation structure between intervals emerging from the underlying periodicities of $\rho_k$ is kept intact. In this sense, we expect our choice of model to better describe the behaviour of the prime counting function $\pi(x)$.

\begin{figure}
\centering
\includegraphics[width=8.7cm]{./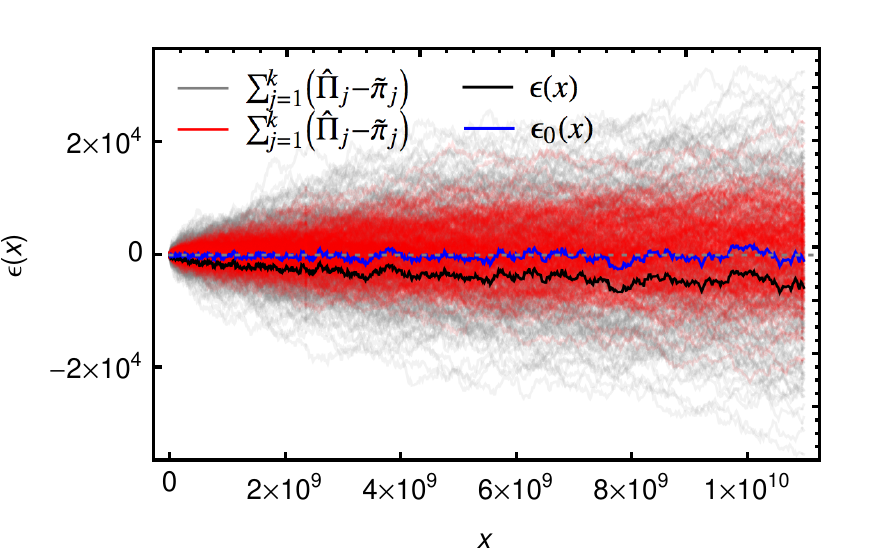}
\caption{
\small \sl
The error function $\epsilon(x)$ (black) and the adjusted error function $\epsilon_0(x)$ (blue) plotted at the values $x = p_{k+1}^2$, $1\leq k \leq 10^4$, together with 275 realisations of the correlated random model (red) and 275 realisations of the uncorrelated random model (grey). 
}
\label{fig:randommodels}
\end{figure}

\section{Theoretical expressions for the variance of $\tilde \Pi\left(p_{k+1}^2\right)$}
\label{sec:variance}

In an elementary calculation, Hausman and Shapiro \citep{hausman:1973} derived the variance of the number of reduced residues modulo $n$ in an arbitrary interval of length $h$. We here extend to the general case of the covariance between two intervals $A_1$ and $A_2$ with corresponding lengths $h_1$ and $h_2$ that are separated by a distance $q$, and where in $A_1$ we regard the number of reduced residues modulo $n_1$, and in $A_2$ the number of reduced residues modulo $n_2$. The resulting expressions for the covariance we present here follows from a straight forward adaption of Hausman and Shapiro's proof.  

Let 
\begin{align*}
    f_n(m):= 
\begin{cases}
    1 \quad \text{if} \quad (m,n) = 1,\\
    0 \quad \text{if} \quad (m,n) > 1.
\end{cases}
\end{align*}
The function of $f_n(m)$ is to weed out coprimes to $n$. To count the number of such in an interval of length $h$, we define
\begin{align*}
F_n(m,h) := \sum_{r=m}^{m+h-1} f_n(r).
\end{align*}
From a probabilistic perspective, the expected number of coprimes to $n$ in an interval of length $h$ is given by 
\begin{align*}
h \frac{\Phi (n)}{n},
\end{align*}
where $\Phi (n)$ is Euler's totient function. 

Denote now the number of coprimes to $n_1$ in $A_1$ and the number of coprimes to $n_2$ in $A_2$ by 
$F_{n_1}(m,h_1)$ and $F_{n_2}(m+q,h_2)$, respectively, where we assume $n_1\leq n_2$ and $q\geq 1$. Then the covariance between $F_{n_1}(m,h_1)$ and $F_{n_2}(m+q,h_2)$ is given by
\begin{align}
&G(n_1,n_2, h_1, h_2, q) \nonumber \\ 
	&{\qquad}:= \frac{1}{n_1 n_2}\sum_{m=1}^{n_1 n_2} \left( F_{n_1}(m, h_1) - h_1 \frac{\Phi ({n_1})}{n_1} \right) \left( F_{n_2}(m+q,h_2) - h_2 \frac{\Phi ({n_2})}{n_2} \right) \label{eq:G}\\
	&{\qquad}= \frac{1}{n_1 n_2}\sum_{m=1}^{n_1 n_2} F_{n_1}(m, h_1)F_{n_2}(m+q,h_2) - h_1 h_2 \frac{\Phi (n_1 n_2)}{n_1 n_2}.\nonumber 
\end{align}
Note that $G$ is periodic with respect to $q$ and has periodicity $n_1$,
\begin{align*}
G(n_1,n_2, h_1, h_2, q+n_1) = G(n_1,n_2, h_1, h_2, q).
\end{align*}

For brevity, we only continue with the case when $h_2>h_1$ and $n_1$ and $n_2$ are both even. Similar expressions can be derived in the other cases. Under this assumption, and introducing the abbreviation
\begin{align*}
	Q(n_1,n_2) :=
	\frac{1}{2}n_1 n_2	 
	\prod_{ \substack{ {p \mid \left( n_1 n_2/(n_1,n_2)^2 \right)} } } \left( 1-\frac{1}{p} \right) 	
	\prod_{ \substack{ {p \mid  (n_1,n_2)'} } } \left( 1-\frac{2}{p} \right),
\end{align*}
where $n'$ denotes the largest odd divisor of $n$, we can express the sum after the last equality in \eqref{eq:G} as
\begin{align*}
&\sum_{m=1}^{n_1 n_2} F_{n_1}(m, h_1)F_{n_2}(m+q,h_2)=\\
& \qquad 
	Q(n_1,n_2)
	\sum_{ \substack{ {k=1} \\  {2 \mid q+k-1} } }^{h_2-h_1+1} h_1  	
	\prod_{ \substack{ {p \mid (n_1,n_2)'} \\ {p \mid (q+k-1)} } } \left( 1+\frac{1}{p-2} \right)+ \\
& \qquad
	Q(n_1,n_2)
	\sum_{ \substack{ {k=1} \\  \\ {2 \mid (q+h_2-h_1+k)} } }^{h_1-1} (h_1 - k)
	\prod_{ \substack{ {p \mid (n_1,n_2)'} \\ {p \mid (q+h_2-h_1+k)} } } \left( 1+\frac{1}{p-2} \right)  + \\
& \qquad
	Q(n_1,n_2)
	\sum_{ \substack{ {k=1} \\  \\ {2 \mid (q-k)} } }^{h_1-1} (h_1 - k)
	\prod_{ \substack{ {p \mid (n_1,n_2)'} \\ {p \mid (q-k)} } } \left( 1+\frac{1}{p-2} \right). 
\end{align*}
For computational purposes, $G(n_1,n_2, h_1, h_2, q)$ is most efficiently calculated using this expression, and it underlies later numerical examples. But it is possible to derive a more compact theoretical expression in terms of the M\"obius function [see \eqref{eq:mobius}].
We also need the following definitions:
For any square-free integer $n$, 
\begin{align*}
\rho(n) := \prod_{p \mid n}(p-2);
\end{align*}

\begin{align*}
\gamma^+(h,q,d) := 
\begin{cases}
0 \quad \textrm{if } d \nmid q+i \textrm{ for all $i$, where } 1 \leq i \leq d \left\{ \frac{h}{d} \right\}, \\
1 \quad \textrm{if } d \mid q+i \textrm{ for some $i$, where } 1 \leq i \leq d \left\{ \frac{h}{d} \right\};
\end{cases}
\end{align*}
\begin{align*}
\gamma^-(h,q,d) := 
\begin{cases}
0 \quad \textrm{if } d \nmid q-i \textrm{ for all $i$, where } 1 \leq i \leq d \left\{ \frac{h}{d} \right\}, \\
1 \quad \textrm{if } d \mid q-i \textrm{ for some $i$, where } 1 \leq i \leq d \left\{ \frac{h}{d} \right\};
\end{cases}
\end{align*}
and
\begin{align*}
\left\{ \frac{q}{d} \right\}_1 := 
\begin{cases}
\left\{ \frac{q}{d} \right\} \quad \textrm{if } d \nmid q,\\
1 \quad \textrm{if } d \mid q.
\end{cases}
\end{align*}

Then we can write the covariance $G(n_1,n_2, h_1, h_2, q)$ on the equivalent form 
\begin{align}
	G(n_1,n_2, h_1, h_2, q) 
	= 
	\frac{1}{n_1 n_2}Q(n_1,n_2) \sum_{ \substack{ {d \mid (n_1,n_2)'} } } \frac{\mu^2(d)}{\rho(d)} \B(h_1,h_2,q,d),
\label{eq:Gcompact}	 
\end{align}
where 
\begin{align*}
	&
	\B(h_1,h_2,q,d)\\
	&{\qquad}=
	h_1 
	\left(
	\frac{1}{2d} - \left\{ \frac{h_2-h_1+1}{2d} \right\}  +  \left\{ \frac{q+h_2-h_1}{2d} \right\} - \left\{ \frac{q}{2d} \right\}_1
	\right)\\
	&{\qquad}-
	2 d 
	\left\{ \frac{h_1}{2d} \right\}
	\left(
	\left\{ \frac{h_1}{2d} \right\} + \left\{ \frac{q+h_2-h_1}{2d} \right\} - \left\{ \frac{q}{2d} \right\}_1
	\right)\\
	&{\qquad}
	+ \gamma^+(h_2-h_1+1,q-1,2d) h_1\\
	&{\qquad}
	+\gamma^+(h_1,q+h_2-h_1,2d) 2d \left( \left\{ \frac{h_1}{2d} \right\} -\left(1-\left\{ \frac{q+h_2-h_1}{2d} \right\} \right) \right)
	\\
	&{\qquad}
	+\gamma^-(h_1,q,2d) 2d \left( \left\{ \frac{h_1}{2d} \right\} - \left\{ \frac{q}{2d} \right\}_1 \right). \\
\end{align*}
Numerical investigations hints to a possible simplification of this expression, as some of the terms appear to always cancel each other, but this remains to be shown theoretically. 

Whenever $A_1=A_2$, so that $h_1=h_2=h$ and $n_1=n_2=n$, we can write $H(n, h):=G(n, n, h, h, 0)$, and \eqref{eq:Gcompact} reduces to the variance derived in \citep{hausman:1973},
\begin{align*}
H(n,h) 
	= \prod_{ \substack{ {p \mid n'} } } \left( 1-\frac{2}{p} \right)	 
	\sum_{ \substack{ {d \mid n'} } }  \frac{\mu^2(d)}{\rho(d)}		
	d \left\{\frac{h}{2d}\right\} \left( 1 - \left\{\frac{h}{2d}\right\} \right).	
\end{align*}
Note that if we apply the inequality $\{x\} ( 1 - \{x\} ) \leq x$, we obtain the upper bound 	
\begin{align}
H(n,h) \leq h \frac{\Phi(n)}{n}.
\label{eq:variancebound}
\end{align}

Returning to the random models in \secref{sec:randommodel}, we can now express the variance of the correlated model as
\begin{align*}
	\Var \left[\tilde \Pi(p_{k+1}^2) \right] 
	&= \sum_{j=1}^k \E\left[\left(\tilde \Pi_j - \tilde \pi_j \right)^2 \right] 
	+ 2 \sum_{1\leq i<j \leq k} \E\left[\left(\tilde \Pi_i - \tilde \pi_i \right)\cdot \left(\tilde \Pi_j - \tilde \pi_j \right) \right]\\
	&= \sum_{j=1}^k H(p_j\#,l_j)
	+ 2 \sum_{1\leq i<j \leq k} G(p_i\#,p_j\#, l_i, l_j, p_{j+1}^2-p_{i+1}^2).
\end{align*}
Likewise, for the uncorrelated model,
\begin{align*}
	\Var \left[\hat{\Pi}(p_{k+1}^2) \right] 
	= \sum_{j=1}^k \E\left[\left(\hat{\Pi}_j - \tilde \pi_j \right)^2 \right] 
	= \sum_{j=1}^k H(p_j\#,l_j).
\end{align*}
From \eqref{eq:variancebound} it follows that 
\begin{align*}
	H(p_j\#,l_j) \leq l_j \cdot \frac{\Phi(p_j\#)}{p_j\#} 
	= \l_j \cdot \prod_{p\in \mathcal{P}_j} \left(1-\frac{1}{p} \right) 
	\sim 2 \e^{-\gamma} \frac{l_j}{\log p_{j+1}^2} 
	\sim 2 \e^{-\gamma} \li_j,
\end{align*}
which results in the variance of the uncorrelated model having the upper bound 
\begin{align*}
	\Var \left[\hat{\Pi}(p_{k+1}^2) \right] 
	\leq 2 \e^{-\gamma} \sum_{j=1}^k \li_j = 2 \e^{-\gamma} \li \left (p_{k+1}^2 \right).
\end{align*}

\section{Properties of the covariance function}
\label{sec:historicalnote}
In this section we present some general considerations on the behaviour of the covariance function $G(n_1,n_2, h_1, h_2, q)$.  Let us first have a look at the role of the relative position $q$ of two intervals, when both are assumed to be sieved by the $k$ first primes. As an example of this scenario, we show in \figref{fig:negcorr}A a plot of $G(p_{100}\#,p_{100}\#, h_1, 2 h_1, q)$ as a function of $q$ for different values of $h_1$. We observe that in each case, the covariance function reaches a negative minimum value when $q=h_1$---the exact situation when the two intervals are adjacent---before slowly climbing towards zero again. In addition, the minimum values grow in size as the length of the intervals increases. While not shown in \figref{fig:negcorr}A, this result also holds for arbitrary values of $h_2$. Note, however, that the observed minimum values are not unique, as shown in \figref{fig:negcorr}B, where  we have plotted $G(p_k\#,p_k\#, h_1, h_1, q)$ across its full period $p_k\#$ for two values of $h_1$.

The reason why $q=h_1$ corresponds to a minimum value of the covariance function can be understood in terms of the following heuristic:  For the sake of argument, assume that $A_1$ and $A_2$ are both sieved by $p_k$ only, and that their lengths $h_1$ and $h_2$ satisfy $h_1 + h_2 \leq p_k$. Then we have that 
\begin{align*}
F_{p_k}(m,h_1) = h_1-1 \quad \textrm{or} \quad F_{p_k}(m,h_1) = h_1,
\end{align*}
depending respectively on whether the interval $A_1$ lies across a multiple of $p_k$ or not. Likewise in the case of $A_2$:
\begin{align*}
F_{p_k}(m,h_2) = h_2-1, \quad \textrm{or} \quad F_{p_k}(m,h_2) = h_2.
\end{align*}
Under these assumptions, we have that 
\begin{align*}
G(p_k,p_k, h_1, h_2, q) = \frac{1}{p_k}\sum_{m=1}^{p_k} F_{p_k}(m, h_1)F_{p_k}(m+q,h_2) - h_1 h_2 \left( \frac{p_k-1}{p_k} \right)^2.
\end{align*}
When $A_1$ and  $A_1$ do not overlap, that is, $q \geq h_1$, we can write the sum after the equality in terms of three contributions,
\begin{align*}
&\sum_{m=1}^{p_k} F_{p_k}(m, h_1)F_{p_k}(m+q,h_2)\\
&\qquad = (p_k - h1-h_2)\cdot[h_1 h_2] + h_1\cdot[h_2 (h_1-1)] + h_2 \cdot[h_1 (h_2-1)]\\
&\qquad = h_1 h_2 (p_k-2).
\end{align*}
And if $A_1$ and $A_2$ do overlap, that is, $0\leq q < h_1$, in terms of four contributions,
\begin{align*}
&\sum_{m=1}^{p_k} F_{p_k}(m, h_1)F_{p_k}(m+q,h_2)\\
&\qquad = (p_k -h_2-q)\cdot[h_1 h_2] + q\cdot [(h_1-1)h_2 ] + (h_1-q)\cdot[(h_1-1)(h_2-1)] \\
&\qquad \qquad+ (h_2-h_1+q)\cdot[h_1(h_2-1)]\\
&\qquad = -q+h_1 +h_1 h_2 (p_k-2).
\end{align*}
Comparing these two sums, we see that $G(p_k,p_k, h_1, h_2, q)$  is minimised whenever $q\geq h_1$, in which case
\begin{align*}
G(p_k,p_k, h_1, h_2, q) = -\frac{h_1 h_2}{p_k^2} <0.
\end{align*}
Note that the larger the product $h_1 h_2$ is, the larger the absolute value of the covariance at its minimum, as we observe in \figref{fig:negcorr}A. 

Letting go of the restriction that $h_1+h_2 \leq p_k$ complicates the heuristic, since more careful accounting is needed; as does sieving the intervals by multiple primes. It seems realistic, however, that a proof can be produced to show that in general, $G(p_k\#,p_k\#, h_1, h_2, q)$ takes on a negative minimum value whenever $q=h_1$.


\begin{figure}
\centering
\includegraphics[width=360pt]{./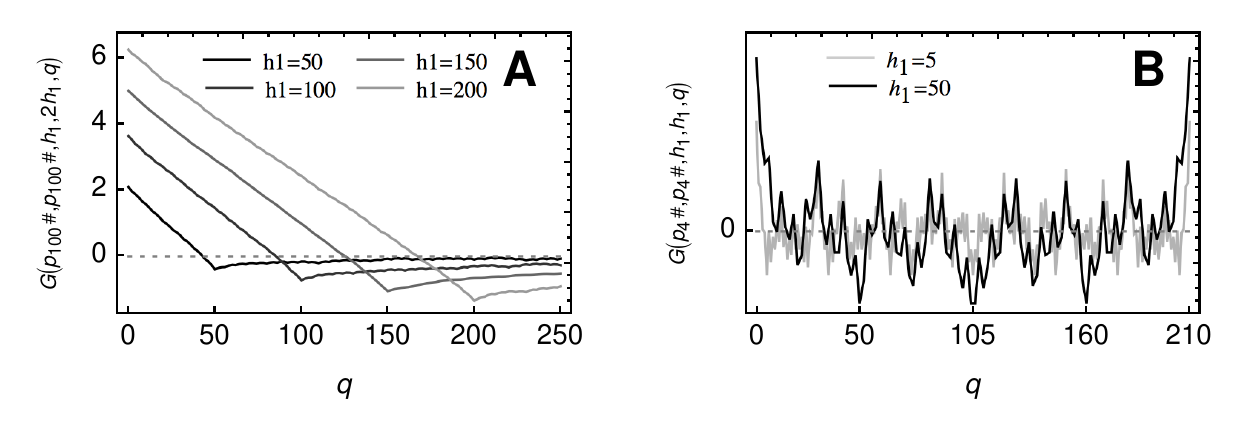}
\caption{
\small \sl
Covariance plots: (A) $G(p_{100}\#, p_{100}\#, h_1, 2 h_1, q)$ plotted for four different values of $h_1$ as a function of $q$. In each case, the covariance function is minimum at $q=h_1$. (B) $G(p_4\#, p_4\#, h_1, h_1, q)$ plotted for two different values of $h_1$ as a function of $q$ across its full period. The plot illustrates that the minimum found at $q=h_1$ is in general not unique. 
}
\label{fig:negcorr}
\end{figure}

Aside the relative positions of intervals, a second aspect to consider is what primes each interval is sieved by. Again, assume two intervals, now with lengths $h_1$ and $h_2$, where the first is sieved by the $i$ first primes, and the second by the $j$ first primes. Then we can write the covariance function as
\begin{align*}
G(p_i\#,p_j\#, h_1, h_2, q) 
= c_1 \prod_{ \substack{ {p \mid \left( p_j\#/p_i\# \right) } } } \left( 1-\frac{1}{p} \right)
- c_2 \prod_{ \substack{ {p \in \mathcal{P}_j } } } \left( 1-\frac{1}{p} \right),
\end{align*}
where $c_1$ and $c_2$ are constants. Both products approaches 0 as $j\rightarrow \infty$, and therefore we have that 
\begin{align*}
\lim_{j\rightarrow \infty} G(p_i\#,p_j\#, h_1, h_2, q) = 0.
\end{align*}
This limit behaviour is illustrated in \figref{fig:limitsieve}; $G(p_i\#,p_j\#, h_1, h_2, q)$ is plotted for two values of $q$, and in both cases it moves slowly towards 0 as $j$ increases. 

Combining these two insights---the covariance function is minimised for adjacent intervals and approaches zero as the difference in sieving steps between intervals increases--- 
we anticipate that $G(p_i\#,p_j\#, l_i, l_j, p_{j+1}^2-p_{i+1}^2)$
is negative when $j-i$ is small, increasingly so when $i$ and $j$ both become large, and grows smaller in absolute value as the difference $j-i$ increases. Hence, the sum  
\begin{align*}
	\sum_{1\leq i<j \leq k} G(p_i\#,p_j\#, l_i, l_j, p_{j+1}^2-p_{i+1}^2)
\end{align*}
should be dominated by negative terms, implying that the variance of the correlated random model is bounded by the same upper bound as the uncorrelated random model. Thus,
\begin{align*}
	\Var \left[\tilde \Pi(p_{k+1}^2) \right] 	\leq \sum_{i=1}^k H(p_i\#,l_i) \leq 2 \e^{-\gamma}\li(p_{k+1}^2).
\end{align*}

\begin{figure}
\centering
\includegraphics[width=250pt]{./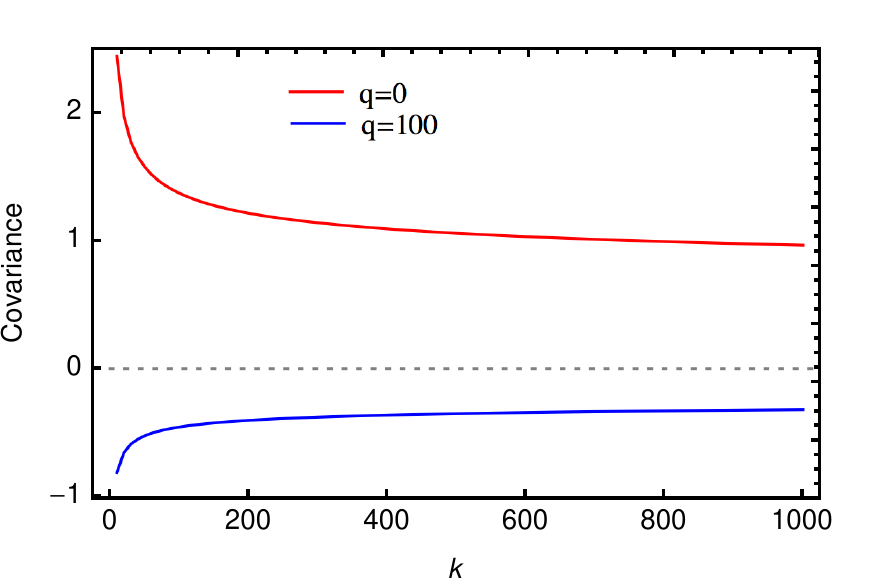}
\caption{
\small \sl
The covariance function $G(p_{10}\#, p_{k}\#, 100, 100, q)$ plotted for two different values of $q$ as a function of $10 \leq k \leq 1000$. 
}
\label{fig:limitsieve}
\end{figure}

\section{Comparing random models and empirical data}
\label{sec:historicalnote}

In order to verify the predictions stated in the previous section we now consider four cases: 1) The theoretical variance of the correlated random model; 2) The variance of the correlated random model estimated from sampled realisations of $\tilde \Pi\left(p_{k+1}^2\right)$; 3) The estimated variance of the prime counting function $\pi(x)$ under the assumption that $E[\pi(x)]=\li(x)$; and 4) The estimated variance of the prime counting function $\pi(x)$ under the assumption that 
\begin{align*}
	E[\pi\left(p_{k+1}^2\right)]=c\cdot \li\left(p_{k+1}^2\right)-(1-c)  \cdot  \sum_{i=1}^k l_i/\log {p_{i+1}^2}.
\end{align*}

Let us start with the separate contribution from the covariance terms between intervals. In the theoretical case, we write the sum  
\begin{align*}
	\sum_{1\leq i <j \leq k} G(p_i\#,p_j\#, l_i, l_j, p_{j+1}^2-p_{i+1}^2)
\end{align*}
on the equivalent form
\begin{align*}
	\sum_{j=1}^{k-1} \left( \sum_{i=1}^{k-j} G(p_i\#,p_{i+j}\#, l_i, l_{i+j}, p_{i+j+1}^2-p_{i+1}^2) \right),
\end{align*}
and denote the expression in parentheses by 
\begin{align*}
	\kappa_{\textrm{th}}(j) := \sum_{i=1}^{k-j} G(p_i\#,p_{i+j}\#, l_i, l_{i+j}, p_{i+j+1}^2-p_{i+1}^2),
\end{align*}
where the label "th" is short for theory. $\kappa_{\textrm{th}}(j)$ is therefore the sum over all covariance terms originating from $j$th nearest intervals. For example, $\kappa_{\textrm{th}}(1)$ gives the sum over all covariance terms for neighbouring intervals. We also need the total remaining contribution to the covariance for all $j$ larger than some value $d$, which we obtain by
\begin{align*}
	\sum_{j>d}^{k-1} \kappa_{\textrm{th}}(j).
\end{align*}

Now to the second case, where we draw samples of $\tilde \Pi\left( p_{k+1}^2 \right)=\sum_{i=1}^k \tilde \Pi_k$.  We define the error function in each interval $j$ as $\epsilon_{\textrm{sim},j}:=\tilde \Pi_j - \tilde \pi_j$, 
where the label "sim" is short for simulation, and write the global error function as 
\begin{align*}
	\epsilon_{\textrm{sim}}(p_{k+1}^2) := \sum_{j=1}^k (\tilde \Pi_j - \tilde \pi_j) = \sum_{j=1}^k \epsilon_{\textrm{sim},j}.
\end{align*}
Then the measured variance of one realisation of $\tilde \Pi\left( p_{k+1}^2 \right)$ can be expressed as 
\begin{align*}
	\Var\left[\tilde \Pi \left( p_{k+1}^2 \right) \right] 
	= \left(\sum_{j=1}^k \epsilon_{\textrm{sim},j}\right)^2 
	= \sum_{j=1}^k \epsilon_{\textrm{sim},j}^2 + 2 \sum_{1\leq i<j \leq k} \epsilon_{\textrm{sim},i} \cdot \epsilon_{\textrm{sim},j}.
\end{align*}

Similar to the theoretical case, we rewrite the sum  
\begin{align*}
	\sum_{1\leq i<j \leq k} \epsilon_{\textrm{sim},i} \cdot \epsilon_{\textrm{sim},j}
\end{align*}
as
\begin{align*}
	\sum_{j=1}^{k-1} \left(  \sum_{i=1}^{k-j} \epsilon_{\textrm{sim},i} \cdot \epsilon_{\textrm{sim},i+j} \right).
\end{align*}
Since we are interested in the average taken over many realisations, we define
\begin{align*}
	\kappa_{\textrm{sim}}(j) := \left\langle \sum_{i=1}^{k-j} \epsilon_{\textrm{sim},i} \cdot \epsilon_{\textrm{sim},i+j} \right\rangle,
\end{align*}
and the total remaining contribution to the covariance for all $j$ larger than some value $d$, by
\begin{align*}
	\sum_{j>d}^k \kappa_{\textrm{sim}}(j).
\end{align*}

Finally, in the cases of the primes, we define 
\begin{align*}
	\epsilon_{pr,j}:=\pi_j - \li_j \quad \textrm{and} \quad
	\epsilon_{ad,j}:=\pi_j -\left[c\cdot \li_j - (1-c)  \cdot \frac{l_j}{\log {p_{j+1}^2} }\right],
\end{align*}
where "pr" and "ad" denote primes and primes with adjusted mean, and obtain
\begin{align*}
	\epsilon_{pr}\left(p_{k+1}^2\right) :=  \sum_{j=1}^k \epsilon_{pr,j} 
	\quad \textrm{and} \quad
	\epsilon_{ad}\left(p_{k+1}^2\right) 
	:= \sum_{j=1}^k \epsilon_{ad,j}.
\end{align*}
Then, entirely analogous to the previous cases, we obtain 
\begin{align*}
	\kappa_{\textrm{pr}}(j) := \sum_{i=1}^{k-j} \epsilon_{pr,i} \cdot \epsilon_{pr,i+j} \quad \textrm{and} \quad
	\sum_{j>d}^k \kappa_{\textrm{pr}}(j),
\end{align*}
and
\begin{align*}
	\kappa_{\textrm{ad}}(j) := \sum_{i=1}^{k-j} \epsilon_{ad,i} \cdot \epsilon_{ad,i+j} \quad \textrm{and} \quad
	\sum_{j>d}^k \kappa_{\textrm{ad}}(j).
\end{align*}

We now plot $\kappa_{\textrm{th}}(j)$, $\kappa_{\textrm{sim}}(j)$, $\kappa_{\textrm{pr}}(j)$, and $\kappa_{\textrm{ad}}(j)$ for different values of $j$ up to a maximum $d$, as well as the remainder terms, all shown in \figref{fig:covarianceSums}. The different plots are across different ranges of $k$---limited to smaller $k$ in the first two cases due to increasing computational cost for large $k$---but we observe that they are all qualitatively similar, confirming that the primes behave according to the correlated random model. For direct comparison, we include $\kappa_{\textrm{pr}}(1)$ and $\kappa_{\textrm{pr}}(2)$ also in the plots of $\kappa_{\textrm{th}}(j)$ and $\kappa_{\textrm{sim}}(j)$. At this short range, however, the fluctuations of $\kappa_{\textrm{pr}}(1)$ and $\kappa_{\textrm{pr}}(2)$ are rather significant, so we are not able to determine whether $\kappa_{\textrm{pr}}(j)$ converges to the theoretical value of the correlated random model in the limit of large $k$. In fact, as will be evident when we plot the total variance in each case, it appears that the covariance functions $\kappa_{\textrm{pr}}(j)$ and $\kappa_{\textrm{ad}}(j)$ in general lie below $\kappa_{\textrm{th}}(j)$ as $k$ grows large, and this might be what we actually observe in \figref{fig:covarianceSums}A and more convincingly in \figref{fig:covarianceSums}B. 

As we anticipated in the previous section, the contribution to the covariance is largest when $j$ is small; $\kappa_{\textrm{a}}(1)$ is more than a factor two larger than $\kappa_{\textrm{a}}(2)$ (here "a" denotes any of "th", "sim",  "pr", or "ad"). And as $j$ grows larger, $\kappa_{\textrm{a}}(j)$ approaches zero in absolute value. Note though that while we can observe some $\kappa_{a}(j)$ being positive, the remainder terms in all cases add up to negative values. 

Comparing \figref{fig:covarianceSums}C and \figref{fig:covarianceSums}D, we see that for small $j$, $\kappa_{\textrm{pr}}(j)$ and $\kappa_{\textrm{ad}}(j)$ are almost identical. However, for $j$ large the remainder terms differ significantly. Assuming $\li(x)$ to be the correct expectation value of $\pi(x)$ produces large fluctuations in the remainder term, while a much smoother curve results from assuming the adjusted expectation value. 

\begin{figure}
        \centering
        \begin{subfigure}[b]{\textwidth}
	        \centering
                \includegraphics[width=360pt]{./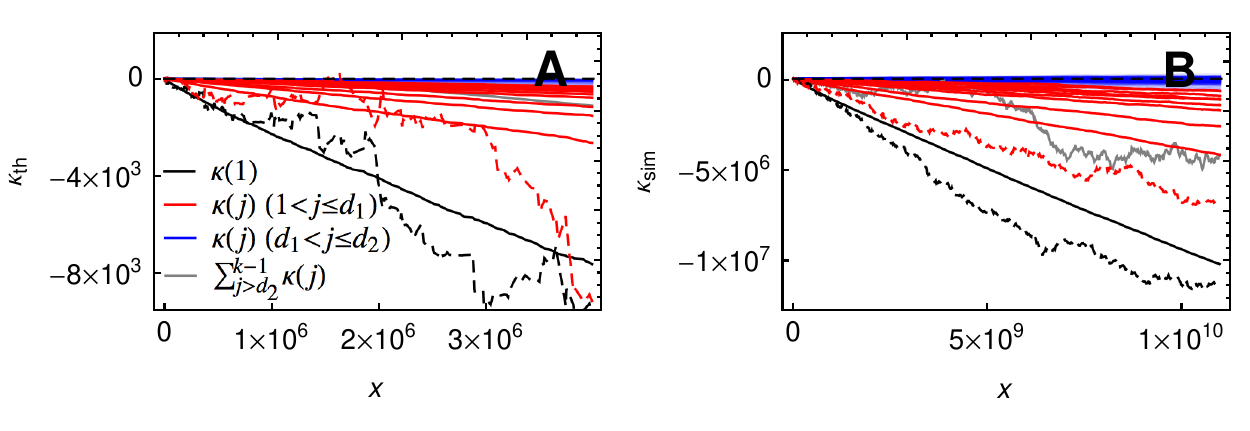}
        \end{subfigure}%
        ~ 

        \begin{subfigure}[b]{\textwidth}
        \centering
                \includegraphics[width=360pt]{./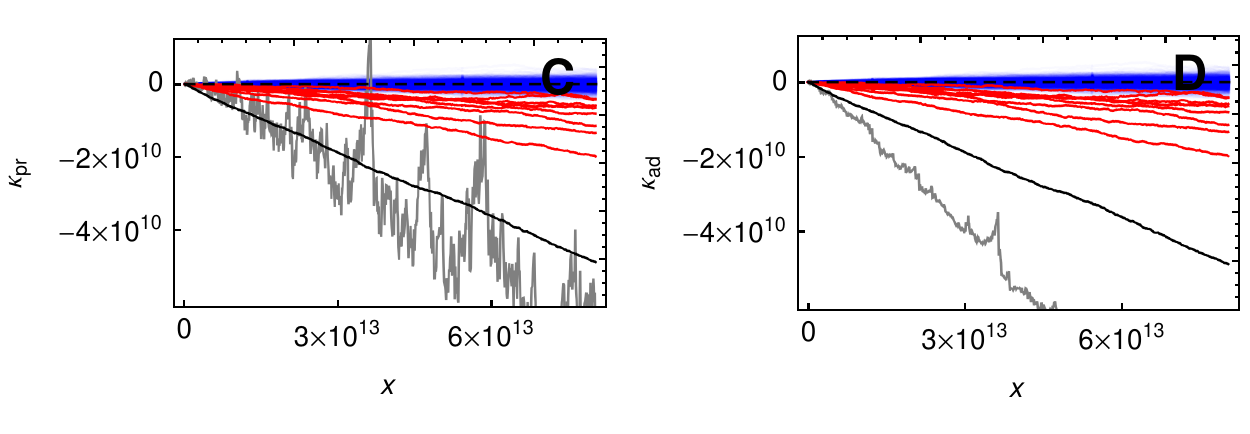}
        \end{subfigure}
\caption{
\small \sl
The covariance sums $\kappa_{\textrm{th}}(j)$ (A), $\kappa_{\textrm{sim}}(j)$ (B), $\kappa_{\textrm{pr}}(j)$ (C), and $\kappa_{\textrm{ad}}(j)$ (D) plotted at the values $x = p_{k+1}^2$. The legend applies for all three covariance sums. In (A), $1\leq k \leq 300$, and data are plotted for each value of $k$, $d_1=10$, and $d_2=50$; In (B), $1\leq k \leq 10^3$,  and data are plotted for every 10th value of $k$, $d_1=10$, and $d_2=200$. The curves are obtained by averaging over 275 individual samples; In (C) and (D), $1\leq k \leq 6\times 10^5$, and data are plotted for every 500th value of $k$, $d_1=10$, and $d_2=1000$. In (A) and (B), $\kappa_{\textrm{pr}}(1)$ and $\kappa_{\textrm{pr}}(2)$ are included for comparison (dashed black and red, respectively).
}
\label{fig:covarianceSums}
\end{figure}

Next, we continue by examining the total variance in each case, or rather, the standard deviations, that are given by 
\begin{align*}
	\sigma_{\textrm{th}}\left( p_{k+1}^2 \right) 
	&:= \sqrt{\Var\left[\tilde \Pi(p_{k+1}^2) \right]}, \\
	\sigma_{\textrm{sim}}\left( p_{k+1}^2 \right) 
	&:= \sqrt{\left\langle \left(\sum_{i=1}^k \epsilon_{\textrm{sim},j}\right)^2  \right\rangle}, \\
	\sigma_{\textrm{pr}}\left( p_{k+1}^2 \right) 
	&:= \sqrt{ \left(\sum_{j=1}^k \epsilon_{\textrm{pr},j}\right)^2 }, \quad \textrm{and}\\
	\sigma_{\textrm{ad}}\left( p_{k+1}^2 \right) 
	&:= \sqrt{ \left(\sum_{j=1}^k \epsilon_{\textrm{ad},j}\right)^2 }.
\end{align*}
In addition, we consider the standard deviations under the assumption that all covariance terms are zero, that is, corresponding to the uncorrelated random model. Hence,
\begin{align*}
	\sigma_{\textrm{th},0}\left( p_{k+1}^2 \right) 
	&:= \sqrt{\sum_{j=1}^k H(p_j\#,l_j)},\\
	\sigma_{\textrm{sim},0}\left( p_{k+1}^2 \right) 
	&:= \sqrt{\left\langle \sum_{j=1}^k \epsilon_{\textrm{sim},j}^2\right\rangle}, \\
	\sigma_{\textrm{pr},0}\left( p_{k+1}^2 \right) 
	&:= \sqrt{\sum_{j=1}^k \epsilon_{\textrm{pr},j}^2}, \quad \textrm{and}\\
	\sigma_{\textrm{ad},0}\left( p_{k+1}^2 \right) 
	&:= \sqrt{\sum_{j=1}^k \epsilon_{\textrm{ad},j}^2}\, .
\end{align*}

Finally, we include the upper bound we found earlier for the standard deviation of the uncorrelated random model:
\begin{align*}
	\sigma_{\textrm{ub}}\left( p_{k+1}^2 \right) := \sqrt{2 \e^{-\gamma}\li\left( p_{k+1}^2 \right) }.
\end{align*}

The different standard deviations are plotted in \figref{fig:stddev}. Under the assumption of the uncorrelated model, we observe that the curves of $\sigma_{\textrm{pr},0}$ and $\sigma_{\textrm{ad},0}$ lie on top of each other and are not visibly different. 
They also lie close to the theoretical value $\sigma_{\textrm{th},0}$, as shown in \figref{fig:stddev}A, and in fact seems to converge to $\sigma_{\textrm{sim},0}$, as evidenced in \figref{fig:stddev}B.

Including correlations, we note in \figref{fig:stddev}A that $\sigma_{\textrm{pr}}$ starts out larger than $\sigma_{\textrm{th}}$, but seems to stabilise below $\sigma_{\textrm{sim}}$ in \figref{fig:stddev}B. It appears that this trends continues in \figref{fig:stddev}C, but without comparison with the correlated random model, we cannot state this with certainty. In all cases, $\sigma_{\textrm{ad}}$ is small compared to $\sigma_{\textrm{pr}}$. 

As expected from the covariance contribution being negative, the total variance is always smaller than the variance of the uncorrelated model, which again is bounded by $\sigma_{\textrm{ub}}$, placed far above any of the other curves. 

\begin{figure}
        \centering
        \begin{subfigure}[b]{\textwidth}
	        \centering
                \includegraphics[width=360pt]{./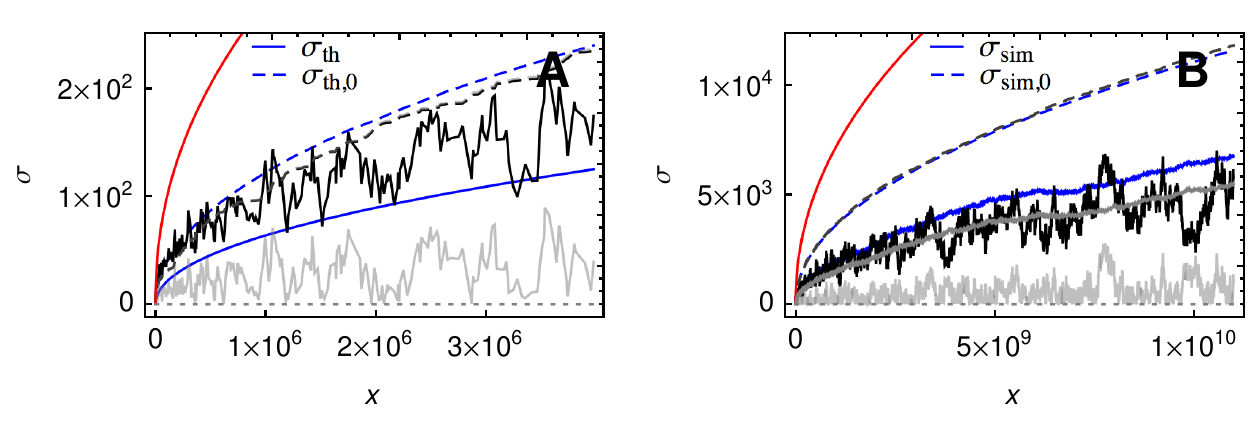}
        \end{subfigure}%
        ~ 
          
        \begin{subfigure}[b]{\textwidth}
        \centering
                \includegraphics[width=180pt]{./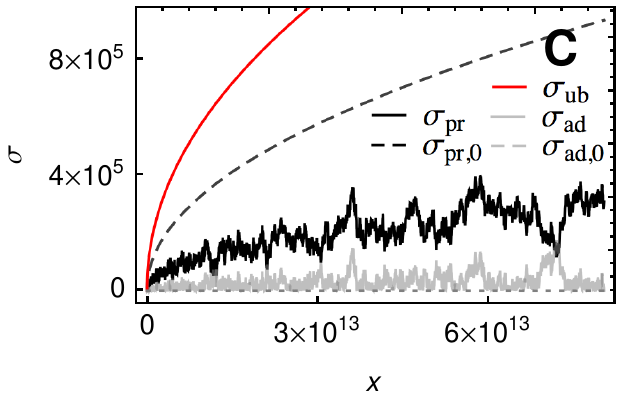}
        \end{subfigure}
\caption{
\small \sl
Different standard variance functions plotted at the values $x = p_{k+1}^2$ (calculated from theory, samples from the correlated random model, and from the prime counting function $\pi(x)$). The legends in (C) also applies to (A) and (B). In (A), $1\leq k \leq 300$, and data are plotted for each value of $k$; In (B), $1\leq k \leq 10^3$,  and data are plotted for every 10th value of $k$. The curves for $\sigma_{sim}$ and $\sigma_{sim,0}$ are obtained by averaging over 275 individual samples; In (C), $1\leq k \leq 6\times 10^5$, and data are plotted for every 100th value of $k$.         
}
\label{fig:stddev}
\end{figure}
\section{The distribution of primes and the Riemann hypothesis}
\label{sec:historicalnote}

Considered an outstanding challenge in number theory, the Riemann hypothesis states that the real part of every non-trivial zero of the Riemann zeta function is 1/2, where the Riemann zeta function is defined by
\begin{align*}
	\zeta(s) := \sum_{n=1}^{\infty} \frac{1}{n^s}. 
\end{align*}
The Riemann zeta function ties in tightly with the distribution of primes through the Euler product formula, which states that
\begin{align*}
	\zeta(s) = \prod_{p \in \mathcal{P}} \left(1-\frac{1}{p^s} \right)^{-1}.
\end{align*}
Our particular interest is in the fact that the Riemann hypothesis is equivalent to the upper bound on the error function being
\begin{align*}
	|\pi(x) - \li(x)| =O\left(\sqrt{x} \log x\right),	
\end{align*}
a statement proven by von Koch in 1901 \citep{vonKoch:1901ui}.

Koch's criteria is far from being a tight upper bound; it is much wider than what we have conjectured---and justified---in the previous sections, namely that for $x$ large enough,
\begin{align}
	|\pi(x) - \li(x)| < 2 \e^{-\gamma}\sqrt{\li(x)}.
\label{eq:bounduncorrelated}
\end{align}
In fact, we know enough to conjecture an even tighter upper bound, expressed discretely at the values $x=p_{k+1}^2$, as 
\begin{align}
	|\pi\left(p_{k+1}^2\right) - \li \left(p_{k+1}^2\right)| 
	<\sqrt{\sum_{j=1}^k \frac{ l_j \log \left( p_{j+1}^2/l_j \right) }{ \left( \log p_{j+1}^2 \right)^2 }}.
\label{eq:errorconjecturediscrete}
\end{align}
This statement follows from Montgomery and Soundararajan's conjecture discussed in \secref{sec:numresults}; the expression on the right side of the inequality is an estimate of the standard deviation of the uncorrelated random model. Due to the negative contribution to the variance from the covariance terms between intervals, this bound holds stance for the correlated random model and hence the primes. Furthermore,
since Montgomery and Soundararajan's conjecture holds empirically, we should expect that the bound should align with the sum of squares of the error functions in each interval, 
\begin{align}
	\sum_{j=1}^k (\pi_j - \li_j)^2.
\label{eq:sumofsquares}	
\end{align}

We can also express \eqref{eq:errorconjecturediscrete} in continuous form. Observe that under the assumption of Cramer's conjecture \citep{Cramer:1936ud}---that is, $g_j:=p_{j+1}-p_j =O\left( (\log p_j)^2\right)$---we have that 
\begin{align*}
	\log \left( p_{j+1}^2/l_i \right) 
	= \log p_{j+1}^2 - \log \left(2 g_j p_{j+1} - g_j^2 \right) 
	\sim \frac{1}{2} \log p_{j+1}^2,
\end{align*}
which further implies
\begin{align*}
\sum_{j=1}^k \frac{ l_j \log \left( p_{j+1}^2/l_j \right) }{ \left( \log p_{j+1}^2 \right)^2 }
\sim
\frac{1}{2} \sum_{j=1}^k \frac{ l_j }{ \log p_{j+1}^2  } 
\sim
\frac{1}{2} \li \left (p_{k+1}^2 \right).
\end{align*}
We can therefore restate \eqref{eq:errorconjecturediscrete} as
\begin{align}
	|\pi(x) - \li(x)| < \sqrt{\frac{1}{2}\li(x)}.
\label{eq:contestimate}
\end{align}
Neglecting any constant coefficients, we state this conjecture more generally as 
\begin{conjecture}
The error term in the prime number theorem satisfies
\begin{align*}
	|\pi(x) - \li(x)| = O\left(\sqrt{\li(x)}\right).
\end{align*}
\end{conjecture}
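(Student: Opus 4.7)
The plan is to derive the bound in two stages: first establish the discrete version at the points $x = p_{k+1}^2$ via the variance analysis of the correlated random model, and then extend it to all $x$ using the conjectural distribution of $\pi_k$ within a single interval $s_k$. The guiding principle is that the error $|\pi(x)-\li(x)|$ should behave, for a typical $x$, like a sample drawn from the distribution of the random variable $\tilde\Pi(x)$ constructed in \secref{sec:randommodel}, and so the bound \eqref{eq:contestimate} should follow once we control $\Var[\tilde\Pi(p_{k+1}^2)]$ and then show this control transfers to the actual primes.

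First I would establish the discrete bound at $x=p_{k+1}^2$. Using the decomposition
\begin{align*}
\Var[\tilde\Pi(p_{k+1}^2)] = \sum_{j=1}^k H(p_j\#,l_j) + 2\sum_{1\leq i<j\leq k} G(p_i\#,p_j\#,l_i,l_j,p_{j+1}^2-p_{i+1}^2),
\end{align*}
I would (i) sharpen the variance bound \eqref{eq:variancebound} by applying Montgomery--Soundararajan's conjectured form, giving $H(p_j\#,l_j)$ on the order of $l_j\log(p_{j+1}^2/l_j)/(\log p_{j+1}^2)^2$, (ii) prove the sharper inequality in \secref{sec:historicalnote} that the full double sum over $G$ is dominated by a negative contribution, so it may be dropped from the upper bound, and (iii) sum the diagonal estimates using $l_j \log(p_{j+1}^2/l_j) \sim \tfrac12 l_j \log p_{j+1}^2$ (valid under Cramér's conjecture on prime gaps) to obtain $\Var[\tilde\Pi(p_{k+1}^2)] \ll \li(p_{k+1}^2)$. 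Applying Chebyshev-type reasoning or, better, exploiting the conjectured normality of each $\pi_k$ (\figref{fig:mongomery}) to propagate to the sum, this yields $|\pi(p_{k+1}^2)-\li(p_{k+1}^2)| = O(\sqrt{\li(p_{k+1}^2)})$.

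Second, I would lift the bound from the discrete points $p_{k+1}^2$ to arbitrary $x$. Given any $x$, locate the unique $k$ with $p_k^2 \leq x < p_{k+1}^2$ and write
\begin{align*}
\pi(x)-\li(x) = \bigl(\pi(p_k^2)-\li(p_k^2)\bigr) + \bigl(\pi(x)-\pi(p_k^2)\bigr) - \bigl(\li(x)-\li(p_k^2)\bigr).
\end{align*}
The first term is $O(\sqrt{\li(p_k^2)}) = O(\sqrt{\li(x)})$ by the discrete bound. The remaining difference measures fluctuations within the single interval $s_k$; by \conref{con:pntlocal} and the normality of $\pi_k$ around $\mu_k$ with standard deviation $\sigma_k = O(\sqrt{l_k \log(p_{k+1}^2/l_k)}/\log p_{k+1}^2)$, this is $O(\sigma_k) = O(\sqrt{l_k/\log p_{k+1}^2})$, which is absorbed into $O(\sqrt{\li(x)})$ since $l_k = O(x^{1/2})$ is negligible compared to $\li(x)$.

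The main obstacle is clearly step (ii): rigorously showing that the double sum of covariances $G(p_i\#,p_j\#,l_i,l_j,p_{j+1}^2-p_{i+1}^2)$ yields, in aggregate, a non-positive contribution, or at worst a contribution of order $o(\li(x))$. The paper only offers a heuristic argument (the minimum of $G$ at $q=h_1$, the decay as $j-i$ grows) together with the numerical evidence of \figref{fig:covarianceSums}. A rigorous treatment would require a uniform estimate on $G(p_i\#,p_j\#,h_1,h_2,q)$ that is summable in $i,j$ with the right sign, most plausibly by re-expressing the Möbius-weighted formula \eqref{eq:Gcompact} and showing that the terms with $d>1$ produce cancellation rather than reinforcement at the specific offsets $q=p_{j+1}^2-p_{i+1}^2$. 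A secondary, but still substantial, obstacle is promoting the conjectured normality of $\pi_k$ (and the predicted variance) from an empirical statement into a theorem strong enough to drive the Chebyshev step; this is essentially the Montgomery--Soundararajan conjecture itself, which is out of reach by current methods. Thus the proposed route reduces the final conjecture to two cleanly separated subproblems, one combinatorial (signed sums of $G$) and one distributional (Montgomery--Soundararajan), either of which would represent substantial independent progress.
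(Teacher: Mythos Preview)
Your proposal tracks the paper's own heuristic derivation almost step for step: bound the diagonal variance terms by the Montgomery--Soundararajan expression, argue that the off-diagonal covariances contribute non-positively, simplify via Cram\'er's conjecture to obtain $\sum_j l_j\log(p_{j+1}^2/l_j)/(\log p_{j+1}^2)^2 \sim \tfrac12\li(p_{k+1}^2)$, and then pass to the big-$O$ statement. The paper does not supply a proof either---the statement is offered as a conjecture with exactly the same two gaps you isolate (negativity of the aggregate covariance and the Montgomery--Soundararajan distributional input)---so your identification of the obstacles is accurate and matches the paper's own assessment. Your explicit lifting step from $x=p_{k+1}^2$ to general $x$ is a small addition the paper omits, but it is routine once \conref{con:pntlocal} is assumed.
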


We continue by plotting the error function $|\pi(x) - \li(x)|$ together with the different upper bounds, as shown in \figref{fig:riemann}. Note that because Koch's criteria is much larger than the other bounds, we use a logarithmic scale on the $y$-axis. As expected, we find that the discrete estimate in \eqref{eq:errorconjecturediscrete}, corresponding to the uncorrelated random model, lies on top of the sum of squares \eqref{eq:sumofsquares} (in the plot we include the lower order term for higher accuracy), while the error function $|\pi(x) - \li(x)|$ is placed well below these bounds. The continuous estimate \eqref{eq:contestimate} lies close to the discrete estimate \eqref{eq:errorconjecturediscrete}, but a bit higher, since the lower order terms are not accounted for here; nonetheless, as $x\rightarrow \infty$, we expect their relative distance to decrease. Slightly above these curves we find the upper bound of the uncorrelated random model \eqref{eq:bounduncorrelated}. And finally, we observe that Koch's criteria for the Riemann hypothesis is residing high above all the other curves, leaving plenty of room below it. 

Essentially then, by considering the prime counting function $\pi(x)$ as a sum over correlated random variables, we have arrived at a fundamental explanation of why the Riemann hypothesis must be correct, as illustrated by the different theoretical bounds displayed in \figref{fig:riemann}. A purely technical proof still awaits, but perhaps what we have presented here can serve as a stepping stone towards one.

\begin{figure}
\centering
\includegraphics[width=250pt]{./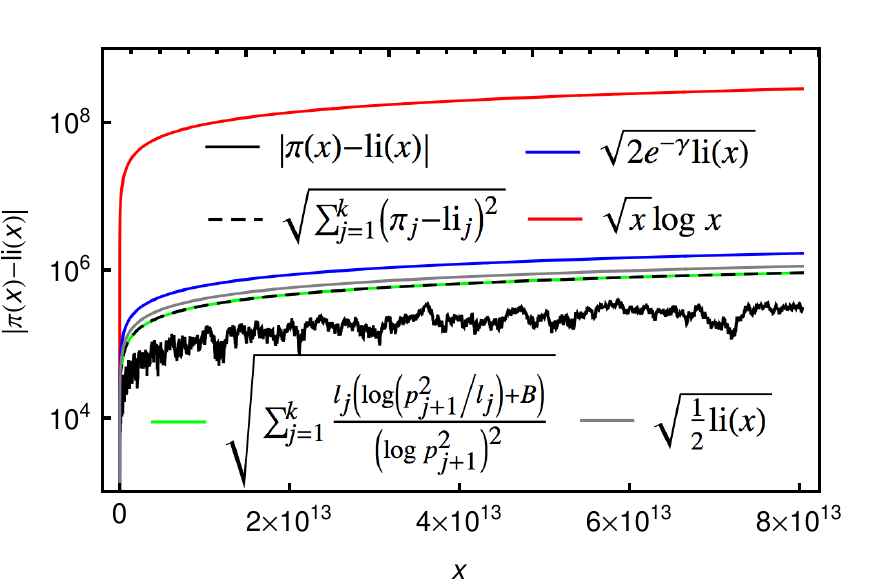}
\caption{
\small \sl
The error function $|\pi(x) - \li(x)|$ and different upper bounds plotted at the values $x = p_{k+1}^2$  for every 100th value of $k$, $1\leq k \leq 6\times 10^5$. 
%
%
%
}
\label{fig:riemann}
\end{figure}

\small
\bibliography{primereferences}
\vspace{2mm}
\bibliographystyle{plainnat}

\end{document}